\documentclass[10pt,reqno]{amsart}
\usepackage{amsfonts,amsmath,amssymb}
\usepackage{mathrsfs}
\usepackage{pifont}
\usepackage{enumerate}
\usepackage{amssymb,amsmath,bbm}
\usepackage{hyperref} 
\usepackage{cite} 
\usepackage{enumitem}
\setlist[enumerate]{itemsep=5pt, leftmargin=*}
\usepackage[dvipsnames]{xcolor}
\usepackage[latin1]{inputenc}

\usepackage{mathtools}
\mathtoolsset{showonlyrefs}

\makeatletter \@namedef{subjclassname@2010}{
  \textup{2010} Mathematics Subject Classification}
\makeatother

\newtheorem{thm}{Theorem}[section]

\newtheorem{prop}[thm]{Proposition}
\newtheorem{coro}[thm]{Corollary}
\newtheorem{lem}[thm]{Lemma}

\theoremstyle{definition}

\newtheorem{rem}[thm]{Remark}
\newtheorem{exem}[thm]{Example}
\newtheorem{Ques}{Question}

\newcommand{\0}{{\color{gray}0}}
\newcommand{\B}{\mathcal{B}}
\newcommand{\C}{\mathbb{C}}

\newcommand{\D}{\mathbb{D}}

\newcommand{\T}{\mathbb{T}}
\newcommand{\h}{\mathcal{H}}
\newcommand{\K}{\mathcal{K}}
\newcommand{\M}{\mathcal{M}}
\newcommand{\MM}{\mathsf{M}}
\newcommand{\N}{\mathcal{N}}
\newcommand{\R}{\mathcal{R}}
\newcommand{\Z}{\mathbb{Z}}
\newcommand{\newinner}[1]{\langle\!\langle #1 \rangle\!\rangle}
\newcommand{\newnorm}[1]{|\!|\!| #1 |\!|\!|}
\newcommand{\inner}[1]{\langle #1 \rangle}
\newcommand{\norm}[1]{\| #1 \|}
\renewcommand{\vec}[1]{\mathbf{#1}}

\newcommand{\apspec}{\sigma_{\mathrm{ap}}}
\newcommand{\pspec}{\sigma_{\mathrm{p}}}
\newcommand{\rank}{\operatorname{rank}}

\newcommand{\textminimatrix}[4]{\big[ \begin{smallmatrix} #1 & #2 \\ #3 & #4 \end{smallmatrix}\big]}

\begin{document}

\title[On $m$-partial isometries]{O\lowercase{n $m$-partial isometries: spectra, weighted shifts, and similarity}}

\author{Mohamed Amine Aouichaoui}
\address{University of Monastir, Faculty of Sciences of Monastir, 5019, Monastir, Tunisia}
\email{amine.aouichaoui@fsm.u-monastir.tn}

\author{Micha{\l} Bucha{\l}a}
\address{AGH University of Krakow, Faculty of Applied Mathematics, al. A. Mickiewicza
30, 30-059 Krakow}
\email{mbuchala@agh.edu.pl}

\author{Stephan Ramon Garcia}
\address{Department of Mathematics and Statistics, Pomona College, 610 N. College Ave.,
Claremont, CA 91711, USA}
\email{stephan.garcia@pomona.edu}
\urladdr{\url{https://stephangarcia.sites.pomona.edu/}}

\subjclass{47A05, 47A10, 47B02, 47B99}

\keywords{$m$-Isometry, $m$-partial isometry, similarity, spectrum}

\begin{abstract} 
The aim of this paper is to study $m$-partial isometries on Hilbert spaces, a natural extension of partial isometries and $m$-isometries. We establish structural and spectral results, characterize the $m$-partial isometric weighted shifts, and investigate similarity to $m$-isometries and $m$-partial isometries.
\end{abstract}

\maketitle

\section{Introduction}
The class of $m$-isometric operators was introduced by Agler in 1990 \cite{Agler}. He and Stankus investigated them further in the influential trilogy \cite{Agler1,Agler2,Agler3}. Since then $m$-isometries have been a subject of intense research. Much attention has been paid to their local properties \cite{Bermudez3,jablonski2020m} and characterizing the $m$-isometric operators in certain classes of operators \cite{Abdullah,Bermudez2, Bermudez,Gu}. There have also been several results concerning $m$-isometric dilations, which are natural generalizations of the celebrated Sz.-Nagy dilation theorem for contractions \cite{Badea3,Badea2,Buchala,Suciu3}. Generalizations of $m$-isometries have also been proposed. For example, Gleason and Richter extended the concept to commuting tuples \cite{Gleason}. Sid Ahmed and Saddi introduced $m$-partial isometries \cite{Ahmed}, which serve as a simultaneous extension of $m$-isometries and partial isometries. The aim of this paper is to study $m$-partial isometries in more detail. In particular, we investigate their spectral properties and several similarity problems.

This paper is organized as follows.
Section \ref{Section:Basics} introduces $m$-partial isometries and their basic properties.
Weighted shifts are the focus of Section \ref{Section:WeightedShifts}, in which Theorem \ref{Theorem:Structure} completely describes those that are $m$-partially isometric.
Section \ref{Section:Spectrum} concerns the spectra of $m$-partial isometries.
In particular, we prove a spectral-inclusion result (Theorem \ref{Theorem:Spectrum}) and investigate
$m$-isometric dilations of $m$-partial isometries (Theorem \ref{Theorem:Dilations}).  
Section \ref{Section:Similarity} contains several results about similarity to $m$-isometries or $m$-partial isometries.  
We conclude this paper in Section \ref{Section:Further}, in which we pose several open problems that we hope will motivate further research on these topics.

\subsection{Notation}
In what follows, $\h$ denotes a complex Hilbert space endowed with inner product $\langle \cdot \, , \, \cdot\rangle$ and associated norm $\|\cdot\|$. We denote by $\B(\h)$ the algebra of all bounded linear operators on $\h$. For each $T \in \B(\h)$, we denote by $\sigma(T)$, $\R(T)$, $\N(T)$, and $T^{*}$ the spectrum, range, kernel, and adjoint of $T$, respectively. Moreover, $\sigma(T), \sigma_{\mathrm{ap}}(T)$, and $\sigma_{\mathrm{p}}(T)$ represent the spectrum, approximate point spectrum and point spectrum of $T$, respectively. A (closed) subspace $\M\subseteq \h$ is \emph{invariant} for $T$ (or \emph{$T$-invariant}) if $T\M\subseteq \M$. The orthogonal complement of $\M$ is denoted by $\M^\perp$ and the orthogonal projection onto $\M$ by $P_{\M}$. We let $\D$, $\D^{-}$, and $\T$ denote the open unit disk, the closed unit disk, and the unit circle in the complex plane $\C$, respectively. We write $\sim$ for similarity and $\sim_{+}$ for similarity via a positive intertwiner, and we let $\MM_n$ denote the set of $n \times n$ complex matrices.

\subsection{The case $m=1$}
Recall that $T \in \B(\h)$ is an isometry if $T^{*} T=I$, in which $I$ is the identity operator. Equivalently, $T$ is a contraction with a contractive left inverse $S$; that is, there exists an $S \in \B(\h)$ with $S T=I$ and $\norm{S} \leq 1$. 
The Wold--von Neumann theorem decomposes an isometry as a direct sum of a unitary operator and a unilateral shift. 
This algebraic characterization extends up to similarity if one replaces ``contraction'' with ``bounded powers'' \cite{Nagy, Pisier}. Thus, $T$ is similar to an isometry if and only if it is power bounded and has a left inverse with bounded powers. 
In finite-dimensional spaces, isometries are unitaries.  Consequently, $A \in \MM_{n}$ is similar to an isometry if and only if it is similar to a unitary.  This occurs if and only if $A$ is diagonalizable and $\sigma(A) \subseteq \T$. 

Recall that $T \in \B(\h)$ is a partial isometry if $T T^{*} T=T$;
that is, $T^{*}$ is a generalized inverse of $T$. Mbekhta proved that $T \in \B(\h)$ is a partial isometry if and only if it is a contraction with a contractive generalized inverse; that is, an $S$ such that $STS = S$, $TST = T$, and $\norm{S} \leq 1$ \cite{Mbekhta}; see also \cite{Badea}.

\noindent\textbf{Acknowledgments.} SRG was partially supported by NSF Grant DMS-2452084. MB was supported by the subsidy granted to AGH University of Krakow by Polish Ministry of Science and Higher Education.

\section{$m$-partial isometries}\label{Section:Basics}

For $T \in \B(\h)$ and integer $m \geq 1$, define the selfadjoint operator
\begin{equation}\label{eq:Beta}
\beta_m(T) := \sum_{k=0}^m (-1)^{m-k} \binom{m}{k} \, T^{*k} T^k ,
\end{equation}
in which $\textstyle\binom{m}{k}$ is a binomial coefficient.  
If $\beta_m(T) = 0$, then $T$ is an \emph{$m$-isometry}.  
The definition $\beta_m(T) = 0$ is equivalent to
\begin{equation}\label{eq:NormCondition}
\sum_{k=0}^m (-1)^{m-k} \binom{m}{k} \norm{T^k \vec{x}}^2 = 0
\end{equation}
for all $\vec{x} \in \h$.  
Whenever we speak of an $m$-isometry it is with the understanding that $m \geq 1$. 
A $1$-isometry is an isometry in the usual sense: $T^*T = I$.
An $m$-isometry is a \emph{strict $m$-isometry} if it is not
an $(m-1)$-isometry.  Since
\begin{equation}\label{eq:BetaInduction}
\beta_{m+1}(T) = T^* \beta_m(T) T - \beta_m(T),
\end{equation}
it follows that an $m$-isometry is an $n$-isometry for all $n \geq m$.
If $T$ is an invertible $m$-isometry, then so is $T^{-1}$ because
\begin{equation}\label{eq:InverseAlso}
(-1)^m\beta_m(T^{-1}) = (T^*)^{-m}\beta_m(T) T^{-m} =0.
\end{equation}

Recall that $T \in \B(\h)$ is a \emph{partial isometry} if $TT^*T = T$;
that is, if $T\beta_1(T) = 0$.  More generally, $T$ is an \emph{$m$-partial isometry} if $T \beta_m(T) = 0$;
it is \emph{strict} if it is not an $(m-1)$-partial isometry.
These definitions originate in \cite{Ahmed}, although its consequences appear to be little pursued in the literature.

There is another characterization of partial isometries: $T\in \B(\h)$ is a partial isometry
if and only if $T^*T$ is an orthogonal projection.  In fact, $T^*T$ is the orthogonal projection onto
$\N(T)^{\perp}$.  This occurs precisely when $I - T^*T = -\beta_1(T)$
is the orthogonal projection onto $\N(T)$.  The next lemma generalizes this observation.

\begin{lem}\label{Lemma:Projection}
  Let $T\in \B(\h)$. The following are equivalent.
  \begin{enumerate}
\item $T$ is an $m$-partial isometry.
\item $(-1)^m\beta_m(T)$ is the orthogonal projection onto $\N(T)$.
  \end{enumerate}
\end{lem}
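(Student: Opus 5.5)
The implication (2)$\Rightarrow$(1) is immediate. If $(-1)^m\beta_m(T)=P_{\N(T)}$, then $T\beta_m(T) = (-1)^m T P_{\N(T)} = 0$, because $TP_{\N(T)}=0$. So $T$ is an $m$-partial isometry.

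For (1)$\Rightarrow$(2), assume $T\beta_m(T)=0$. The plan is to compute $\beta_m(T)$ separately on $\N(T)$ and on its orthogonal complement, using selfadjointness of $\beta_m(T)$ to pass from one to the other.

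First, for $\vec{x}\in\N(T)$ we have $T^k\vec{x}=0$ for every $k\ge 1$. Only the $k=0$ term of \eqref{eq:Beta} survives, so
\[
\beta_m(T)\vec{x} = (-1)^m \vec{x}.
\]
Hence $(-1)^m\beta_m(T)$ acts as the identity on $\N(T)$.

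Second, the hypothesis $T\beta_m(T)=0$ says $\R(\beta_m(T))\subseteq \N(T)$. Since $\beta_m(T)$ is selfadjoint, $\N(\beta_m(T)) = \R(\beta_m(T))^{\perp} \supseteq \N(T)^\perp$. Thus $\beta_m(T)$ vanishes on $\N(T)^\perp$.

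Combining the two steps, for $\vec{x}=\vec{u}+\vec{v}$ with $\vec{u}\in \N(T)$ and $\vec{v}\in\N(T)^\perp$, we get $(-1)^m\beta_m(T)\vec{x} = \vec{u} = P_{\N(T)}\vec{x}$. This establishes (2).

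The argument is essentially routine. The only point needing care is the second step: we must use that $\beta_m(T)$ is selfadjoint, so that a range contained in $\N(T)$ forces a kernel containing $\N(T)^\perp$. The first step is a direct evaluation of \eqref{eq:Beta}, and the two steps then combine without further work.
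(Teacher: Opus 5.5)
Your proof is correct and follows essentially the same route as the paper. Both arguments evaluate $\beta_m(T)$ directly on $\N(T)$ and use selfadjointness to show that $\beta_m(T)$ vanishes on $\N(T)^\perp$. The paper phrases the second step as $\beta_m(T)T^*=0$, so $\beta_m(T)$ vanishes on $\R(T^*)^-=\N(T)^\perp$. You phrase it as $\N(\beta_m(T))=\R(\beta_m(T))^\perp\supseteq\N(T)^\perp$.
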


\begin{proof}
(a) $\Rightarrow$ (b). Suppose that $T$ is an $m$-partial isometry. Then $\R(\beta_{m}(T)) \subseteq \N(T)$
by definition. Moreover, $(-1)^m \beta_m(T) \vec{h} = \vec{h}$ for any $\vec{h} \in \mathcal{N}(T)$.
Therefore, $\R(\beta_m(T)) = \N(T)$.
Since $\beta_m(T)$ is selfadjoint, the definition $T \beta_m(T) = 0$ ensures that $\beta_{m}(T)T^*\vec{h} = \vec{0}$ for every $\vec{h}\in \h$. With respect to the orthogonal decomposition $\h = \N(T)\oplus \R(T^*)^-$, we have
$(-1)^m\beta_{m}(T) = \textminimatrix{I}{\0}{\0}{\0}$,
so $(-1)^m\beta_{m}(T) = P_{\N(T)}$.

\noindent (b) $\Rightarrow$ (a).  Suppose that $(-1)^m\beta_m(T) = P_{\N(T)}$.
If $\vec{h} = \vec{h}_1 + \vec{h}_2$, in which $\vec{h}_{1}\in \N(T)$ and 
$\vec{h}_{2}\in \R(T^*)^-$, then $ T\beta_{m}(T)\vec{h} = (-1)^m T P_{\N(T)} \vec{h} = (-1)^m T\vec{h}_{1} = \vec{0}$. 
Thus, $T$ is an $m$-partial isometry.
\end{proof}

\begin{rem}
    Suppose that $T \in \B(\h)$ is an $m$-partial isometry with 
    $\N(T) = \{\vec{0}\}$.  Then Lemma \ref{Lemma:Projection} ensures that $(-1)^m \beta_m(T) = 0$, so
    $T$ is an $m$-isometry.
\end{rem}

\begin{exem}
For each integer $m\geq 1$, we claim that $T=\textminimatrix{0}{0}{m^{-1/2}}{0}$ is an $m'$-partial isometry if and only if $m' = m$. Indeed,
\begin{equation*}
    (-1)^{m'}\beta_{m'}(T) = \begin{bmatrix}
    1-\frac{m'}{m} & 0\\
    0 & 0
    \end{bmatrix},
\end{equation*}
so $ (-1)^{m'}\beta_{m'}(T) = P_{\mathcal{N}(T)} $ if and only if $m' = m$. In particular, $ T $ is a strict $m$-partial isometry that is not an $(m+1)$-partial isometry.
\end{exem}

\begin{exem}
Suppose that $T \in \B(\h)$ has the block decomposition
\begin{equation*}
T=
\begin{bmatrix}
0 & A\\
0 & B
\end{bmatrix}
\end{equation*}
with respect to $\h = \N(T) \oplus \R(T^*)^-$.  Let $M=A^*A+B^*B$ and observe that
\begin{equation*}
T^*T=
\begin{bmatrix}
0 & 0\\
0 & M
\end{bmatrix}    
\quad \text{and} \quad
T^{*2}T^2=
\begin{bmatrix}
0 & 0\\
0 & B^*MB
\end{bmatrix},
\end{equation*}
so
\begin{equation}\label{eq:TTB}
T^{*2}T^2-2T^*T+I
=
\begin{bmatrix}
I & 0\\
0 & B^*MB-2M+I
\end{bmatrix}.    
\end{equation}
Thus, $T$ is a $2$-partial isometry if and only if
$T(T^{*2}T^2-2T^*T+I)=0$; that is,
\begin{equation*}
A(B^*MB-2M+I)=0
\quad \text{and} \quad
B(B^*MB-2M+I)=0.
\end{equation*}
The condition $B^*MB-2M+I=0$ ensures that $T$ is a $2$-partial isometry. If this occurs, 
\eqref{eq:TTB} is not $0$, so $T$ is not a $2$-isometry, whereas
$T(T^{*2}T^2-2T^*T+I)=0$, so $T$ is a $2$-partial isometry.
Such block operator matrices appear in \cite{Fialkow}, where the authors studied conditions which may guarantee similarity to a partial isometry.
\end{exem}

It is well-known that the adjoint of any partial isometry is again a partial isometry. 
However, this property does not hold in general for $m$-partial isometries.

\begin{exem}\label{ex:m-partial-adjoint}
A computation confirms that 
\begin{equation*}
T = 
\begin{bmatrix}
0 & 0 & 0 \\
\sqrt{ \frac{2}{3}} & 0 & 0 \\
0 & \frac{\sqrt{2}}{2} & 0
\end{bmatrix} \in \B(\C^3)    
\end{equation*}
is a $2$-partial isometry and $T^*$ is not.
\end{exem}

However, there are certain conditions that ensure that the adjoint of an $m$-partial isometry is an $m$-partial isometry.

\begin{rem}
We claim that if $T\in \B(\C^2)$ is a $2$-partial isometry, then $T^*$ is a $2$-partial isometry.
If $\rank T = 0$, the assertion is trivial. 
If $\rank T = 2$, then $T$ is invertible and hence $T$ is a $2$-isometry.
Theorem 2.7 in \cite{Bermudez} ensures that $T$ is unitary, so $T^*$ is a $2$-partial isometry.
Now suppose that $\rank T = 1$.  Then there are unit vectors $\vec{u},\vec{v}\in \C^2$ and a scalar $\alpha > 0$ such that $T\vec{x} = \alpha \inner{ \vec{x}, \vec{v} } \vec{u}$ for all $\vec{x}\in \C^2$.  Moreover, $\N(T) = \operatorname{span}\{ \vec{v} \}^{\perp}$,
$T\vec{v}=\alpha \vec{u}$, and $T^2 \vec{v}=\alpha^2\inner{ \vec{u}, \vec{v} } \vec{u}$, so
the $2$-partial isometry condition is
$\alpha^4 |\langle u,v\rangle|^2-2\alpha^2+1=0$.    
Similarly, $T^*\vec{x}=\alpha \inner{ \vec{x},\vec{u} }\vec{v}$ for all $\vec{x} \in \C^2$
and $\N(T^*)^{\perp}=\operatorname{span}\{u\}$.  Then $T^*\vec{u}=\alpha \vec{v}$ and
$(T^*)^2 \vec{u} =\alpha^2\inner{ \vec{v},\vec{u} } \vec{v}$. Since
$\alpha^4 |\langle v,u\rangle|^2-2\alpha^2+1=0$, we conclude that $T^*$ is a $2$-partial isometry.
\end{rem}

\begin{rem}
Recall that $T \in \B(\h)$ is \emph{complex symmetric} if there is a conjugation $C$
(a conjugate-linear, isometric involution) such that $T = CT^*C$ \cite{SNCSO, MPCSO, CSOA, CSO2}.
If $T$ is a complex symmetric $m$-partial isometry, then $T^*$ is too since $C (T^{*k}T^k)C = T^k T^{*k}$.
\end{rem}

If $ T\in \B(\h) $ is a partial isometry, then $TT^*T = T$; that is, $T^*$ 
is a generalized inverse of $T$. The lemma below generalizes this to $m$-partial isometries.

\begin{lem}\label{lem:pseudompartial}
    Let $ T\in \B(\h) $ be an $m$-partial isometry. Then for
    \begin{equation*}
        S = \sum_{k=1}^{m} (-1)^{k-1} \binom{m}{k} T^{\ast k}T^{k-1}
    \end{equation*}
    we have $ T = TST $ and $ S = STS $.    
\end{lem}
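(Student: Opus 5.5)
The plan is to compute $ST$ explicitly and recognize it via Lemma \ref{Lemma:Projection}. Since $S$ ends in powers of $T$ with one fewer factor than the powers of $T^*$, multiplying on the right by $T$ gives
\begin{equation*}
ST = \sum_{k=1}^{m} (-1)^{k-1} \binom{m}{k} T^{*k}T^{k}.
\end{equation*}
On the other hand, by \eqref{eq:Beta},
\begin{equation*}
(-1)^m \beta_m(T) = \sum_{k=0}^m (-1)^{k}\binom{m}{k} T^{*k}T^k = I - ST.
\end{equation*}
Hence $ST = I - (-1)^m\beta_m(T)$. By Lemma \ref{Lemma:Projection}, $(-1)^m\beta_m(T) = P_{\N(T)}$, so $ST = I - P_{\N(T)} = P_{\R(T^*)^-}$.

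The first identity then follows at once: $TST = T(I - P_{\N(T)}) = T - TP_{\N(T)} = T$, since $TP_{\N(T)} = 0$.

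For the second identity, $STS = (ST)S = (I - P_{\N(T)})S$. So it suffices to show $P_{\N(T)}S = 0$, that is, $\R(S) \subseteq \N(T)^\perp = \R(T^*)^-$. This holds because each summand factors as
\begin{equation*}
T^{*k}T^{k-1} = T^*\big(T^{*(k-1)}T^{k-1}\big),
\end{equation*}
whose range lies in $\R(T^*)$. Therefore $P_{\N(T)}S = 0$ and $STS = S$.

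The only real step is to identify $ST$ with $I-(-1)^m\beta_m(T)$ and invoke Lemma \ref{Lemma:Projection}. After that, the naive idea of right-multiplying by $P_{\N(T)}$ fails for $S$, because the $k=1$ term $T^*$ need not annihilate $\N(T)$. The fix is to left-multiply instead, using that every term of $S$ begins with a factor $T^*$ and so has range in $\R(T^*)$.
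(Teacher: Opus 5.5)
Your proof is correct and follows the paper's argument essentially verbatim. You compute $ST = I-(-1)^m\beta_m(T)$, identify it with $P_{\R(T^*)^-}$ via Lemma~\ref{Lemma:Projection}, and then deduce $TST=T$ from $TP_{\N(T)}=0$ and $STS=S$ from $\R(S)\subseteq\R(T^*)$, with your factorization $T^{*k}T^{k-1}=T^*\big(T^{*(k-1)}T^{k-1}\big)$ simply making explicit the range inclusion that the paper asserts without comment.
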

\begin{proof}
    First, note that
    \begin{align*}
        ST &= \sum_{k=1}^{m} (-1)^{-k+1} \binom{m}{k} T^{\ast k}T^{k}\\
        &= (-1)^{m+1}\sum_{k=1}^{m} (-1)^{m-k} \binom{m}{k} T^{\ast k}T^{k}\\
        &= (-1)^{m+1}\left( \beta_m(T)-(-1)^mI\right)\\
        &= I-(-1)^m\beta_m(T).
    \end{align*}
    Then Lemma \ref{Lemma:Projection} ensures that $ST$ is the orthogonal projection onto $\R(T^*)^-$. Then
    \begin{equation*}
        T = T(P_{\N(T)}+P_{\R(T^*)^-}) = TP_{\R(T^*)^-} = TST.
    \end{equation*}
    In turn, since $\R(S) \subset \R(T^*) $ it follows that $STS = S$.
\end{proof}

\section{Weighted shifts}\label{Section:WeightedShifts}

Berm{\'u}dez, Martin\'on, and Negr\'in characterized the $m$-isometric weighted shifts in 2010
\cite{Bermudez2}. A polynomial description of their weight sequences was
later obtained by Abdullah and Le \cite{Abdullah}. 
In this section we study the corresponding problems for $m$-partial
isometries. We completely describe $m$-partially isometric weighted
shifts and illustrate the results with several examples.
We begin with a computation that translates the equation
$T\beta_m(T)=0$ into a recurrence for the weights.

\begin{exem}
Let $(e_{n})_{n \geq 1}$ be an orthonormal basis of $\h$. For a numerical sequence $(\omega_{n})_{n \geq 1}$, the associated \emph{weighted shift operator} $T$ on $\h$ is defined by
$T e_{n}=\omega_{n} e_{n+1}$ for $n \geq 1$.  It is bounded if and only if the weight sequence is bounded;
we assume this is the case in what follows.  One can show that
\begin{equation*}
T^{k} e_{n} = \bigg( \prod_{j=n}^{n+k-1} \omega_{j} \bigg) e_{n+k}
\end{equation*}
and
\begin{equation*}
T^{* k} e_n =
\begin{cases}
0, & n\leq k,\\[3pt]
\bigg(\displaystyle\prod_{j=n-k}^{n-1}\overline{\omega_j}\bigg)e_{n-k},
& n>k.
\end{cases}  
\end{equation*}
Thus,
\begin{equation*}
T^{* k} T^{k} e_{n} = \bigg( \prod_{j=n}^{n+k-1} | \omega_{j} |^2 \bigg) e_{n}
\end{equation*}
and hence $T$ is an $m$-partial isometry if and only if the following holds for all $n \geq 1$:\begin{equation}\label{eq:WeightedCondition}
\omega_{n} \bigg[ (-1)^{m} + \sum_{1 \leq k \leq m} (-1)^{m-k} \binom{m}{k} \bigg( \prod_{j=n}^{n+k-1} | \omega_{j} |^2 \bigg) \bigg] = 0.    
\end{equation}
\end{exem}

\begin{rem}
Let $a_n=|\omega_n|^2$ for $n \geq 1$.
If $a_n,a_{n+1},\ldots,a_{n+m-2}\neq 0$, the relation
\begin{equation*}
(-1)^m + \sum_{k=1}^{m} (-1)^{m-k}\binom{m}{k} \prod_{j=n}^{n+k-1}a_j =0
\end{equation*}
ensures that
\begin{equation*}
a_{n+m-1}
= \frac{ (-1)^{m+1} - \displaystyle\sum_{k=1}^{m-1}
(-1)^{m-k}\binom{m}{k} \prod_{j=n}^{n+k-1}a_j }{ \displaystyle\prod_{j=n}^{n+m-2}a_j }.    
\end{equation*}
Thus, admissible blocks of nonzero weights may be constructed recursively, so long as the right side is positive. If the right side is $0$, then $a_{n+m-1}=0$, so the string of nonzero weights terminates. Negative values are not admissible, since $a_j=|\omega_j|^2\geq 0$, although the recursion can produce such values.  If $m=2$, then
\begin{equation*}
a_{n+1}
= \frac{-1-(-1)\binom{2}{1}a_n }{ a_n }
= \frac{-1+2a_n}{a_n}
= 2-\frac{1}{a_n}.
\end{equation*}
Thus, $a_{n+1}\geq 0$ if and only if $a_n \geq \frac{1}{2}$.  For example, if $a_n = \frac{1}{4}$, then
$a_{n+1} = -2<0$, which is inadmissible.
\end{rem}

The next theorem characterizes $m$-partially isometric weighted shifts.

\begin{thm}\label{Theorem:Structure}
Let $T\in \B(\h)$ be an $m$-partially isometric
weighted shift with weights $(\omega_n)_{n\geq 1}$.
Let $Z=\{n\geq 1:\omega_n=0\}$.
\begin{enumerate}
\item If $Z=\varnothing$, then $T$ is an $m$-isometry.

\item If $Z\neq\varnothing$, then write $Z=\{n_r:r\in I\}$, in which $I$ is an index set and 
$n_1 < n_2 < \cdots $ is an increasing sequence (finite or infinite).
Let $n_0=0$ and
\begin{equation*}
\h_r = \operatorname{span}\{e_{n_{r-1}+1},e_{n_{r-1}+2},\ldots,e_{n_r}\}
\end{equation*}
for each $r \in I$.  If $I=\{1,2,\ldots,s\}$ is finite, let
$\h_{\infty} =  \overline{\operatorname{span}\{e_{n_s+1},e_{n_s+2},\ldots\}}$.
If $I=\mathbb{N}$, then omit $\h_{\infty}$ in what follows. Then
\begin{equation}\label{eq:OrthogonalDecomposition}
\h = \bigg(\bigoplus_{r\in I}\h_r\bigg) \oplus \h_{\infty}    
\end{equation}
is an orthogonal decomposition into reducing subspaces for $T$. With respect to this decomposition,
\begin{equation*}
T =
\bigg(\bigoplus_{r\in I}N_r\bigg)\oplus S,    
\end{equation*}
in which each $N_r = T|_{\h_r}$ is nilpotent of order
$d_r=n_r-n_{r-1}$.
Moreover, $S=T|_{\h_{\infty}}$ is an $m$-isometric weighted shift unless $I = \mathbb{N}$,
in which case it is omitted.
\end{enumerate}
\end{thm}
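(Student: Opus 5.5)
Part (a) follows from the Remark after Lemma \ref{Lemma:Projection}. If $Z=\varnothing$, then every $\omega_n\neq 0$, so $Te_n=\omega_n e_{n+1}$ are nonzero multiples of distinct orthonormal vectors. Hence $T\vec{x}=\sum_n \omega_n x_n e_{n+1}=\vec{0}$ forces $x_n=0$ for all $n$, and $\N(T)=\{\vec{0}\}$. Lemma \ref{Lemma:Projection} then gives $(-1)^m\beta_m(T)=P_{\N(T)}=0$, so $T$ is an $m$-isometry. Equivalently, in \eqref{eq:WeightedCondition} we may divide by $\omega_n$ to get the $m$-isometric recurrence for every $n$.

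For (b), I would first check that each block is reducing. Within a block we have $Te_j=\omega_je_{j+1}\in\h_r$ for $n_{r-1}<j<n_r$ and $Te_{n_r}=\omega_{n_r}e_{n_r+1}=\vec{0}$, so $T\h_r\subseteq\h_r$. Likewise $T^*e_{n_{r-1}+1}=\overline{\omega_{n_{r-1}}}e_{n_{r-1}}=\vec{0}$, using $\omega_{n_0}$ vacuous for $r=1$ since $T^*e_1=0$, and otherwise $T^*e_j=\overline{\omega_{j-1}}e_{j-1}\in\h_r$. Thus each $\h_r$ reduces $T$. When $I$ is finite, $\h_\infty$ is the orthogonal complement of the finite direct sum, hence also reducing: it is $T$-invariant since $T$ shifts forward, and $T^*e_{n_s+1}=\overline{\omega_{n_s}}e_{n_s}=\vec{0}$. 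The orthogonal decomposition \eqref{eq:OrthogonalDecomposition} is immediate because the blocks partition the basis $(e_n)$.

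Next, on $\h_r$ (of dimension $d_r$), $N_r$ is a truncated weighted shift with nonzero weights $\omega_{n_{r-1}+1},\dots,\omega_{n_r-1}$ and last weight zero. So $N_r^{d_r}=0$, while $N_r^{d_r-1}e_{n_{r-1}+1}$ is a nonzero multiple of $e_{n_r}$; hence the nilpotency order is exactly $d_r$. The $m$-partial isometry condition is not needed for this part, since it is purely structural. It is worth noting that the restriction of an $m$-partial isometry to a reducing subspace is again an $m$-partial isometry, because $\beta_m$ commutes with the direct sum. Consequently $S=T|_{\h_\infty}$ is an $m$-partial isometry. But $S$ is a weighted shift on $\{e_{n_s+1},\dots\}$ with all weights nonzero, so $\N(S)=\{\vec{0}\}$ by the same argument as in (a), and the Remark again yields that $S$ is an $m$-isometry.

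The only real subtlety is bookkeeping: the edge case $r=1$ with $n_0=0$, where $T^*e_1=0$ holds regardless of weights, and confirming that reducing-ness uses $\omega_{n_{r-1}}=0$ for the backward boundary. Apart from this, everything reduces to Lemma \ref{Lemma:Projection} and its Remark, and I do not expect any serious obstacle.
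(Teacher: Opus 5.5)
Your proof is correct. The decomposition and the block analysis (reducing subspaces, nilpotency of order exactly $d_r$) are essentially the paper's. The only difference there is that the paper gets reducibility from the nested initial segments $\K_r=\operatorname{span}\{e_1,\dots,e_{n_r}\}$ and then takes orthogonal differences, whereas you check $T$- and $T^*$-invariance of each $\h_r$ directly. These are equivalent.

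The genuine difference is in how you reach the $m$-isometry conclusions. The paper works entirely with the weight identity \eqref{eq:WeightedCondition}. Wherever $\omega_n\neq 0$ it divides out $\omega_n$ to get $\beta_m(T)e_n=0$. For the tail, it uses that the products at indices $n>n_s$ involve only weights of index at least $n$, so the recurrence \eqref{eq:WeightTail} governs $S$ as well.

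You instead combine two general facts:
\begin{enumerate}
\item the $m$-partial isometry property passes to reducing subspaces, since $\beta_m$ respects direct sums;
\item an injective $m$-partial isometry is an $m$-isometry, since Lemma \ref{Lemma:Projection} gives $(-1)^m\beta_m(T)=P_{\N(T)}=0$.
\end{enumerate}
Your route is more conceptual. It shows that the conclusion depends on the shift structure only through injectivity on each piece, and it avoids checking that the restricted operator's $\beta_m$ matches the original's on the tail. The paper's computational route has a different payoff: it produces the explicit recurrence for the tail weights. That recurrence is what connects the theorem to the recursive construction in the preceding remark and to the polynomial description of $m$-isometric weight sequences.
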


\begin{proof}
Suppose that $Z=\varnothing$. Then $\omega_n\neq 0$ for all $n \geq 1$, so \eqref{eq:WeightedCondition} ensures that 
\begin{equation*}
(-1)^m + \sum_{k=1}^{m} (-1)^{m-k}\binom{m}{k} \prod_{j=n}^{n+k-1}a_j =0    
\end{equation*}
for all $n\geq 1$.  Equivalently, $\beta_m(T)\vec{e}_n=0$ for
all $n \geq 1$, so $T$ is an $m$-isometry.

Suppose that $Z \neq \varnothing$. For each $r\in I$, let
$\K_r =\operatorname{span}\{\vec{e}_1,\vec{e}_2,\ldots,\vec{e}_{n_r}\}$. Since $\omega_{n_r}=0$, it follows that
$T\vec{e}_j=\omega_j \vec{e}_{j+1}\in \K_r$ for all $1\leq j\leq n_r$. Thus, $\K_r$ is $T$-invariant. Since
\begin{equation*}
T^*\vec{e}_j=
\begin{cases}
0 & \text{if $j=1$},\\[3pt]
\overline{\omega_{j-1}} \vec{e}_{j-1} & \text{if $j\geq 2$},
\end{cases}    
\end{equation*}
we see that $T^*\vec{e}_j\in \K_r$ for all $1\leq j\leq n_r$.
Thus, each $\K_r$ reduces $T$ and, since they are nested,
their orthogonal differences $\h_r=\K_r\ominus \K_{r-1}
=
\operatorname{span}\{\vec{e}_{n_{r-1}+1},\ldots,\vec{e}_{n_r}\}$
also reduce $T$. 
If $I=\{1,2,\ldots,s\}$, then $\h_{\infty}=\K_s^\perp$
reduces $T$. This yields \eqref{eq:OrthogonalDecomposition}.

Since $n_{r-1}$ and $n_r$ are consecutive zero positions
in the weight sequence, it follows that
$\omega_{n_{r-1}+1},\ldots,\omega_{n_r-1}\neq 0$.
On $\h_r$, the restriction $N_r =T|_{\h_r}$ acts as follows:
\begin{equation*}
\vec{e}_{n_{r-1}+1}
\mapsto
\omega_{n_{r-1}+1} \vec{e}_{n_{r-1}+2}
\mapsto
\cdots
\mapsto
\omega_{n_r-1} \vec{e}_{n_r}
\mapsto
0.    
\end{equation*}
Thus, $N_r^{d_r}=0$, in which $d_r=n_r-n_{r-1}$.
Since an empty product equals $1$, 
\begin{equation*}
N_r^{d_r-1}e_{n_{r-1}+1}
= \bigg( \prod_{j=n_{r-1}+1}^{n_r-1}\omega_j \bigg)e_{n_r} \neq 0,
\end{equation*}
so $N_r$ is nilpotent of order $d_r$.

If $I=\mathbb{N}$, there is nothing to prove. Suppose 
that $I=\{1,2,\ldots,s\}$. By construction, $\omega_n\neq 0$ for $n>n_s$ and hence the factor $\omega_n$ in
\eqref{eq:WeightedCondition} is nonzero, so
\begin{equation}\label{eq:WeightTail}
(-1)^m + \sum_{k=1}^{m} (-1)^{m-k}\binom{m}{k} \prod_{j=n}^{n+k-1}a_j =0.    
\end{equation}
With respect to the orthonormal basis
$\vec{e}_{n_s+1},\vec{e}_{n_s+2},\ldots$ of $\h_{\infty}$,
the restriction $S=T|_{\h_{\infty}}$
is a weighted shift with nonzero weights.  The identity
\eqref{eq:WeightedCondition} ensures that $S$ is an $m$-isometry.
\end{proof}

\begin{exem}
For each $m\geq 1$, we construct an $m$-partially isometric weighted shift $T$ that is similar to $S \otimes S^*$, the tensor product of the unilateral shift $S$ with its adjoint. First recall that $S \otimes S^*$ is unitarily equivalent to $\bigoplus_{n=1}^{\infty} J_n$, in which $J_n$ denotes the $n \times n$ nilpotent Jordan block \cite[Thm.~15.4.9]{OTBE}.  
Fix $m \geq 1$ and place a $0$ weight at each triangular number; that is, at positions $1,3,6,10,15,\ldots$.
This leaves room for separate stretches of $\ell = 1,2,3,4,\ldots$ positive weights between zeros.  To fill
a stretch of $\ell\geq 1$ positive weights use the sequence
\begin{equation*}
\sqrt{\frac{\ell}{\ell+m-1}},\,  
\sqrt{\frac{\ell-1}{\ell+m-2}},\,  
\sqrt{\frac{\ell-2}{\ell+m-3}},\,  
\ldots,\,  
\sqrt{\frac{2}{m+1}},\,  
\sqrt{\frac{1}{m}},
\end{equation*}
each element of which belongs to $[2^{1-m},1)$; in particular, the weights are bounded above and below.  Thus, the weight sequence is
\begin{equation*}
    0,\,  
\sqrt{\frac{1}{m}},\,  
0,\,  
\sqrt{\frac{2}{m+1}},\, \sqrt{\frac{1}{m}},\,  
0,\,  
\sqrt{\frac{3}{m+2}},\, \sqrt{\frac{2}{m+1}},\, \sqrt{\frac{1}{m}},\,  
0,\ldots .
\end{equation*}
The weighted shift $T$ so constructed is a bounded $m$-partial isometry and,
moreover, it is similar to
$\bigoplus_{n=1}^{\infty} J_n$, in which $J_n$ denotes the $n \times n$ nilpotent Jordan block.
\end{exem}

\begin{exem}
Let $\omega_2 = 0$ and $\omega_n = 1$ for all $n \neq 2$.
Then, the associated weighted shift $T$ is a $1$-partial isometry since
the condition $T\beta_1(T)=0$ demands that $\omega_n(|\omega_n|^2-1)=0$ for all $n \geq 1$.
However, $T$ is not a $1$-isometry (that is, an isometry) since it has nontrivial kernel.
More generally, for every $m\geq 1$, 
\begin{equation*}
\omega_1=\frac{1}{\sqrt m},\qquad \omega_2=0,\quad \text{and} \quad \omega_n=1\quad (n\geq 3)    
\end{equation*}
defines an $m$-partial isometry that is neither nilpotent nor an $m$-isometry.
\end{exem}

\begin{exem}
A nilpotent weighted shift need not be an $m$-partial isometry.
For example, let $\omega_1=2$ and $\omega_n=0$ for $n\geq 2$.
Then $T^2=0$ and $T$ is not an $m$-partial isometry for any $m\geq 1$.
Indeed, apply \eqref{eq:WeightedCondition} with $n=1$ and use the fact
that all products of length at least $2$ contain $\omega_2=0$ to obtain
\begin{equation*}
\omega_1 \bigg[ (-1)^m+(-1)^{m-1}\binom{m}{1}|\omega_1|^2 \bigg]=0.    
\end{equation*}
Since $\omega_1=2\neq 0$, this is equivalent to
$(-1)^m+(-1)^{m-1}m|\omega_1|^2=0$,
or, equivalently,
$4= |\omega_1|^2=\frac{1}{m}$, which is impossible.
\end{exem}

\begin{rem}
To obtain an $m$-partial isometry that is not an $m$-isometry, it is necessary that at least one weight vanish.
If no weight vanishes, then $a_n \to 1$ since a weighted shift is $m$-isometric if and only if $a_n = p(n+1)/p(n)$ for some polynomial $p$ of degree at most $m-1$ \cite[Thm.~2.1]{Abdullah}.
\end{rem}

\begin{exem}
Let $m=2$. Then \eqref{eq:WeightedCondition} says that
$\omega_n(1-2a_n+a_na_{n+1})=0$ for all $n \geq 1$.  Thus, 
$a_{n+1} = 2 - 1/a_n$ whenever $\omega_n\neq 0$;
in particular, observe that $a_n = 1$ if and only if $a_{n+1}=1$.
If $a_1 = 1$, then $a_n = 1$ for all $n \geq 1$, in which case $T$ is an isometry.
Now suppose that $a_1 \neq 1$.  Then the sequence $b_n = 1/(a_n-1)$ satisfies
\begin{equation*}
b_{n+1}
=
\frac{1}{a_{n+1}-1}
=
\frac{1}{1-\frac{1}{a_n}}
=
\frac{a_n}{a_n-1}
=
1+\frac{1}{a_n-1}
=
b_n+1,    
\end{equation*}
so $b_n=b_1+n-1$ and hence
\begin{equation*}
a_n = 1+\frac{1}{n-1+\frac{1}{a_1-1}}
\end{equation*}
is nonnegative and tends to $1$; in particular, $\omega_n$ is a bounded sequence.  
Moreover, $a_n = 0$ if and only if $n = -1/(a_1-1)$;
for example, $a_1 = \frac{3}{4}$ yields $a_2 = \frac{2}{3}$, $a_3 = \frac{1}{2}$,
and $a_4 = 0$.  Note that $T$ is not a $2$-isometry since
$1-2a_4+a_4a_5=1\neq 0$.  Thus, $T$  is a $2$-partial isometry that is not a $2$-isometry.
Note that $T$ is a $2$-isometry 
if $a_1 \neq 1 - \frac{1}{n}$ for any integer $n \geq 1$.
\end{exem}

\section{Spectrum}\label{Section:Spectrum}
In this section we study the spectrum of $m$-partial isometries.  In particular, we prove a sharp
spectral-inclusion theorem. We begin with the following lemma.

\begin{lem}\label{boundarycompact}
Let $K \subseteq \C$ be compact and $r>0$. If $\partial K\subseteq r\D^-$, then $K\subseteq r\D^-$.
\end{lem}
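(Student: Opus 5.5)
We argue by contradiction, using only elementary topology of the plane. Suppose $z_0 \in K$ with $|z_0| > r$. Since $\partial K \subseteq r\D^-$, the point $z_0$ does not lie in $\partial K$, so $z_0$ is an interior point of $K$. The aim is to push from $z_0$ radially outward until we leave $K$, and to show that the exit point is a boundary point of $K$ lying outside $r\D^-$.

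First we set up the ray. Consider $\gamma(t) = t z_0$ for $t \geq 1$. Because $K$ is compact, it is bounded, so $\gamma(t) \notin K$ for all sufficiently large $t$. Define
\begin{equation*}
t^* = \sup\{ t \geq 1 : \gamma(s) \in K \text{ for all } s \in [1,t] \}.
\end{equation*}
This supremum is finite, and $t^* \geq 1$.

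Next we check that $\gamma(t^*)$ lies in $K$ but not in its interior. Since $K$ is closed and $\gamma(s) \in K$ for every $s < t^*$ (or $t^* = 1$, in which case $\gamma(t^*) = z_0 \in K$), we get $\gamma(t^*) \in K$. On the other hand, there are points $\gamma(s) \notin K$ with $s > t^*$ arbitrarily close to $t^*$; otherwise the supremum could be increased. Hence every neighborhood of $\gamma(t^*)$ meets the complement of $K$, so $\gamma(t^*)$ is not an interior point. Therefore $\gamma(t^*) \in \partial K$.

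Finally, $|\gamma(t^*)| = t^* |z_0| \geq |z_0| > r$, so $\gamma(t^*) \notin r\D^-$. This contradicts $\partial K \subseteq r\D^-$, and we conclude $K \subseteq r\D^-$.

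The only point that needs care is verifying $\gamma(t^*) \in \partial K$, which uses both closedness and boundedness of $K$. A shorter alternative is to take a point $w \in K$ maximizing $|w|$, which exists by compactness. Such a point lies in $\partial K$, since points $(1+\varepsilon)w$ are not in $K$, and so $\max_{w \in K} |w| \leq r$. We would present this one-line version. The degenerate case $w = 0$ is trivial, since then $K = \{0\} \subseteq r\D^-$.
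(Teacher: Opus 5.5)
Your proposal is correct, and the one-line version you say you would present is exactly the paper's argument: take $w \in K$ maximizing $|w|$, note that the points $(1+\varepsilon)w \notin K$ accumulate at $w$, and conclude $w \in \partial K \subseteq r\D^-$. The longer ray argument is also valid, since it runs the same ``move outward until you leave $K$'' idea from an arbitrary point of modulus $>r$ rather than from a global maximizer; your separate handling of $w=0$ is harmless, although the paper's phrasing (an interior maximizer would force points of larger modulus into $K$) already covers that case.
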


\begin{proof}
Since $K$ is compact, the continuous function $z\mapsto |z|$ attains its maximum on $K$. 
Thus, there exists a $w\in K$ such that $|w|=\max_{z\in K}|z|$.
If $w \in \operatorname{int} K$, then $K$ contains points of
modulus greater than $|w|$, which is impossible.  Thus, $w \in \partial K \subseteq r \D^-$
and hence each $z \in K$ satisfies $|z| \leq |w| \leq r$, so $K \subseteq r\D^-$.
\end{proof}

The computation below owes much to \cite[Lem.~1.21]{Agler1} and \cite[Prop.~2.1]{Cho}.

\begin{thm}\label{Theorem:Spectrum}
Let $T\in \B(\h) $ be an $m$-partial isometry.
\begin{enumerate}
\item For $m$ odd, $\sigma(T) \subseteq \D^-$. In particular, $ r(T) \leq 1.$
\item For $m$ even, $\sigma(T)\subseteq (\sqrt{2}\,\D)^-$ and $\pspec(T) \subseteq \sqrt{2}\,\D$. In particular, $ r(T) \leq \sqrt{2}.$
\end{enumerate}
\end{thm}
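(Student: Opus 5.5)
The approach is the Agler-type approximate-eigenvalue computation, fed by Lemma \ref{Lemma:Projection} and then Lemma \ref{boundarycompact}.

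\emph{Step 1: the approximate point spectrum.} Let $\lambda \in \apspec(T)$ and choose unit vectors $\vec{x}_n$ with $(T-\lambda I)\vec{x}_n \to \vec{0}$. Writing $T^k - \lambda^k I = (T^{k-1}+\lambda T^{k-2}+\cdots+\lambda^{k-1}I)(T-\lambda I)$ shows $T^k\vec{x}_n - \lambda^k\vec{x}_n \to \vec{0}$, so $\norm{T^k\vec{x}_n}^2 \to |\lambda|^{2k}$ for each $k$. Summing against the binomial weights in \eqref{eq:Beta} gives
\begin{equation*}
\inner{(-1)^m\beta_m(T)\vec{x}_n,\vec{x}_n} \longrightarrow (-1)^m\sum_{k=0}^m (-1)^{m-k}\binom{m}{k}|\lambda|^{2k} = (1-|\lambda|^2)^m .
\end{equation*}
By Lemma \ref{Lemma:Projection}, $(-1)^m\beta_m(T)=P_{\N(T)}$. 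Hence the left side equals $\norm{P_{\N(T)}\vec{x}_n}^2\in[0,1]$, and we obtain $0\le (1-|\lambda|^2)^m\le 1$.

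\emph{Step 2: read off the bounds.} For $m$ odd, the inequality $(1-|\lambda|^2)^m \ge 0$ forces $|\lambda|\le 1$. For $m$ even, the inequality $(1-|\lambda|^2)^m\le 1$ forces $|1-|\lambda|^2|\le 1$, so $|\lambda|^2\le 2$. Thus $\apspec(T)\subseteq \D^-$ or $\apspec(T)\subseteq(\sqrt2\,\D)^-$ respectively.

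\emph{Step 3: pass to the whole spectrum.} Since $\partial\sigma(T)\subseteq\apspec(T)$ and $\sigma(T)$ is compact, Lemma \ref{boundarycompact} (with $r=1$ or $r=\sqrt2$) yields the stated inclusions for $\sigma(T)$. The spectral radius bounds follow immediately.

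\emph{Step 4: the point spectrum for $m$ even.} This is the only point needing a little care: we must exclude $|\lambda|=\sqrt2$. Suppose $T\vec{x}=\lambda\vec{x}$ with $\norm{\vec{x}}=1$ and $|\lambda|^2=2$. The computation of Step 1 is then exact, with no limit needed, so
\begin{equation*}
\norm{P_{\N(T)}\vec{x}}^2=(1-2)^m=1 .
\end{equation*}
Hence $\vec{x}\in\N(T)$, so $\lambda\vec{x}=T\vec{x}=\vec{0}$. This contradicts $\lambda\neq 0$. Therefore $\pspec(T)\subseteq\sqrt2\,\D$.
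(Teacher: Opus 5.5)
Your proof is correct and follows essentially the same route as the paper. Both use approximate eigenvectors and Lemma~\ref{Lemma:Projection} to get $0\le(1-|\lambda|^2)^m\le 1$ for $\lambda\in\apspec(T)$, then pass to $\sigma(T)$ via $\partial\sigma(T)\subseteq\apspec(T)$ and Lemma~\ref{boundarycompact}, and rule out eigenvalues of modulus $\sqrt2$ by the exact computation $\norm{P_{\N(T)}\vec{x}}^2=1$, which forces $\vec{x}\in\N(T)$.
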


\begin{proof}
Since $\sigma(T)$ is compact and $\partial\sigma(T)\subseteq \sigma_{\mathrm{ap}}(T)$,
Lemma~\ref{boundarycompact} says that it suffices to establish the desired inclusion for the approximate point spectrum.

Let $\lambda \in \apspec(T)$ and let $\vec{x}_n$ be unit vectors such that
$T\vec{x}_n = \lambda \vec{x}_n + \vec{o}_n(1)$, in which $\vec{o}_n(1)$ is a sequence of vectors that tends to $\vec{0}$ as $n \to \infty$.  The boundedness of $T$ ensures that $T^k \vec{x}_n = \lambda^k \vec{x}_n + \vec{o}_n(1)$ for each integer $k\geq 1$.  Therefore,
\begin{align}
1 
&= \norm{ \vec{x}_n}^2 
\geq \norm{ P_{\N(T)} \vec{x}_n }^2 
= \inner{ (-1)^m \beta_{m}(T)\vec{x}_n,\vec{x}_n} \label{eq:NonNegP}\\
&= \sum_{k=0}^m (-1)^k \binom{m}{k} \inner{ T^{*k}T^k\vec{x}_n, \vec{x}_n} 
= \sum_{k=0}^m (-1)^k \binom{m}{k} \norm{T^k \vec{x}_n}^2 \nonumber \\
&= \sum_{k=0}^m (-1)^k \binom{m}{k} |\lambda|^{2k} + o_n(1)
= (1 - |\lambda|^2)^m + o_n(1), \nonumber
\end{align}
in which $o_n(1)$ is a scalar sequence tending to $0$.  We conclude that 
\begin{equation*}
0 \leq (1 - |\lambda|^2)^m \leq 1,    
\end{equation*}
the lower bound coming from the nonnegativity of $\norm{P_{\N(t)}\vec{x}_n}$ in \eqref{eq:NonNegP}.
If $m$ is odd, then $|\lambda| \leq 1$.
If $m$ is even, then $(1 - |\lambda|^2) \in [-1,1]$ and hence $|\lambda|\leq \sqrt{2}$.
Since $\pspec(T) \subseteq \apspec(T)$, it suffices to show that $T$ has no eigenvalues
of modulus $\sqrt{2}$.  Suppose toward a contradiction that $T \vec{x} = \lambda \vec{x}$, in which
$|\lambda|=\sqrt{2}$ and $\vec{x}$ is a unit vector. Then \eqref{eq:NonNegP} ensures that $1 = \norm{P_{\N(T)}\vec{x}}^2$,
so $\vec{x} \in \N(T)$, a contradiction.
\end{proof}

\begin{exem}
    Let $m$ be odd and $0 < a < 1$.  Then $T = \textminimatrix{a}{0}{b}{0}$, in which
    \begin{equation*}
        b= a\sqrt{ \frac{(1-a^2)^m}{1 - (1-a^2)^m } }
    \end{equation*} 
    is a strict $m$-partial isometry with $\pspec(T) = \{0,a\}$.
    Thus, Theorem \ref{Theorem:Spectrum}.a is sharp for all odd $m$.  
\end{exem}

\begin{exem}
    Let $m$ be even and $0 < a < \sqrt{2}$.  Then $T = \textminimatrix{a}{0}{b}{0}$, in which
    \begin{equation*}
        b = 
        \frac{a (a^2-1)^{m/2}}{ \sqrt{ 1 - (a^2-1)^m } },
    \end{equation*}
    is a strict $m$-partial isometry with $\pspec(T) = \{0,a\}$.
    Thus, Theorem \ref{Theorem:Spectrum}.b is sharp for all even $m$.  
\end{exem}

\begin{rem}
Any compact subset of $\D^-$ (respectively, $(\sqrt{2}\,\D)^-$) that contains $0$ can be the spectrum of an
$m$-partial isometry with $m$ odd (respectively, even).  Simply take direct sums of the matrices in
the previous examples.
\end{rem}

Theorem~\ref{Theorem:Spectrum} gives spectral bounds for $m$-partial isometries. We next record two boundary consequences. 

\begin{coro}\label{cor:outer-boundary-ap}
Let $T\in\B(\h)$ be an $m$-partial isometry.
\begin{enumerate}
\item If $m$ is odd, then $\sigma(T)\cap\T=\apspec(T)\cap\T$.
\item If $m$ is even, then $\sigma(T)\cap\sqrt{2}\,\T=\apspec(T)\cap\sqrt{2}\,\T$
and $\pspec(T)\cap\sqrt{2}\,\T=\varnothing$.
\end{enumerate}
\end{coro}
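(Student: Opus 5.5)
The plan is to combine Theorem~\ref{Theorem:Spectrum} with the standard fact that the topological boundary of the spectrum is contained in the approximate point spectrum, $\partial\sigma(T)\subseteq\apspec(T)$. Since $\apspec(T)\subseteq\sigma(T)$ always holds, in each case only the inclusion $\sigma(T)\cap r\T\subseteq\apspec(T)\cap r\T$ needs proof, where $r=1$ for $m$ odd and $r=\sqrt{2}$ for $m$ even.

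Fix $\lambda\in\sigma(T)$ with $|\lambda|=r$. By Theorem~\ref{Theorem:Spectrum}, $\sigma(T)\subseteq r\D^-$. Every neighborhood of $\lambda$ contains the points $(1+\varepsilon)\lambda$ with $\varepsilon>0$, which have modulus $r(1+\varepsilon)>r$ and therefore lie in the resolvent set. Hence $\lambda$ is not an interior point of $\sigma(T)$, so $\lambda\in\partial\sigma(T)\subseteq\apspec(T)$. This gives the first equality in (a) and in (b).

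For the remaining claim in (b), Theorem~\ref{Theorem:Spectrum}(b) already gives $\pspec(T)\subseteq\sqrt{2}\,\D$, an open disk that does not meet $\sqrt{2}\,\T$. Therefore $\pspec(T)\cap\sqrt{2}\,\T=\varnothing$.

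No step is genuinely hard. The only point needing care is that one must use the closed-disk inclusion from Theorem~\ref{Theorem:Spectrum} to place circle points on the boundary of $\sigma(T)$, and not appeal to Lemma~\ref{boundarycompact}, which runs in the opposite direction.
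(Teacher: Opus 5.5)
Your proposal is correct and follows the paper's own proof: the closed-disk inclusion from Theorem~\ref{Theorem:Spectrum} puts every spectral point on the circle of radius $1$ (respectively $\sqrt{2}$) in $\partial\sigma(T)\subseteq\apspec(T)$, and the point-spectrum claim in (b) is read off from $\pspec(T)\subseteq\sqrt{2}\,\D$. Your explicit $(1+\varepsilon)\lambda$ argument showing that these circle points are boundary points is a detail the paper leaves implicit.
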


\begin{proof}
We use the standard inclusion
$\partial\sigma(T)\subseteq\apspec(T)$. 

\noindent(a) If $m$ is odd, then
Theorem~\ref{Theorem:Spectrum} gives $\sigma(T)\subseteq\D^-$. Hence every point of $\sigma(T)\cap\T$ belongs to $\partial\sigma(T)$, and therefore to
$\apspec(T)$. The reverse inclusion is automatic. 

\noindent(b) If $m$ is even, then Theorem~\ref{Theorem:Spectrum} gives $\sigma(T)\subseteq\sqrt{2}\,\D^-$. Thus, every point of
$\sigma(T)\cap\sqrt{2}\,\T$ belongs to $\partial\sigma(T)$, and hence to
$\apspec(T)$. The reverse inclusion is automatic. Finally,
Theorem~\ref{Theorem:Spectrum} also gives $\pspec(T)\subseteq\sqrt{2}\,\D$, so
$\pspec(T)\cap\sqrt{2}\,\T=\varnothing$.
\end{proof}

\begin{prop}\label{prop:unit-circle}
Let $T\in\B(\h)$ be an $m$-partial isometry. 
\begin{enumerate}
\item If $\lambda\in\apspec(T)\cap\T$, then $\overline\lambda\in\apspec(T^*)$.
\item If $\lambda\in\pspec(T)\cap\T$, then $\overline\lambda\in\pspec(T^*)$.
\item If $T\vec{x}=\lambda \vec{x}$ and $T\vec{y}=\mu \vec{y}$ with  $\lambda\neq\mu$, and $|\lambda|=1$ or $|\mu|=1$, then $\vec{x}\perp \vec{y}$.
\item Let $\lambda\in\apspec(T)\cap\T$ and $\mu\in\apspec(T)$ with $\lambda\neq\mu$. If $\vec{x}_n$ and $\vec{y}_n$ are unit approximate eigensequences for $\lambda$ and $\mu$, respectively, then $\langle{\vec{x}_n},{\vec{y}_n}\rangle\to0$.
\end{enumerate}
\end{prop}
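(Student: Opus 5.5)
The plan is to use Lemma \ref{Lemma:Projection} in \emph{polarized} form. Since $(-1)^m\beta_m(T)=P_{\N(T)}$, we have for all $\vec{x},\vec{y}\in\h$
\begin{equation*}
\inner{P_{\N(T)}\vec{x},\vec{y}} = \sum_{k=0}^m (-1)^k\binom{m}{k}\inner{T^k\vec{x},T^k\vec{y}}.
\end{equation*}
The second ingredient is the computation in the proof of Theorem \ref{Theorem:Spectrum}. If $\vec{x}_n$ is a unit approximate eigensequence for $\lambda$, then $\norm{P_{\N(T)}\vec{x}_n}^2=(1-|\lambda|^2)^m+o_n(1)$. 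When $|\lambda|=1$ this gives $\norm{P_{\N(T)}\vec{x}_n}\to 0$. In the exact eigenvector case $T\vec{x}=\lambda\vec{x}$, the same computation without error terms gives $P_{\N(T)}\vec{x}=\vec{0}$.

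For (a), I substitute $T^k\vec{x}_n=\lambda^k\vec{x}_n+\vec{o}_n(1)$ into the polarized identity. The error is uniform over $\norm{\vec{y}}\le 1$, because each $T^k$ is bounded. This gives
\begin{equation*}
\inner{P_{\N(T)}\vec{x}_n,\vec{y}} = \sum_{k=0}^m(-1)^k\binom{m}{k}\lambda^k\inner{\vec{x}_n,T^k\vec{y}}+o_n(1) = \inner{(I-\lambda T^*)^m\vec{x}_n,\vec{y}}+o_n(1),
\end{equation*}
again uniformly in $\vec{y}$. The left side tends to $0$ uniformly, so taking the supremum over unit $\vec{y}$ yields $\norm{(I-\lambda T^*)^m\vec{x}_n}\to0$. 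Since $|\lambda|=1$, we can write $I-\lambda T^*=-\lambda(T^*-\overline\lambda I)$, so $\norm{(T^*-\overline\lambda I)^m\vec{x}_n}\to 0$. If $\overline\lambda\notin\apspec(T^*)$, then $T^*-\overline\lambda I$ is bounded below, hence so is its $m$th power, which is a contradiction. The main point to check carefully is this uniformity in $\vec{y}$; it is the only slightly delicate step.

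For (b), the same argument with exact equalities gives $\inner{(I-\lambda T^*)^m\vec{x},\vec{y}}=0$ for all $\vec{y}$. Hence $(T^*-\overline\lambda I)^m\vec{x}=\vec{0}$ with $\vec{x}\neq\vec{0}$. Let $j$ be the largest index with $\vec{z}=(T^*-\overline\lambda I)^{j}\vec{x}\neq \vec{0}$, where $0\le j<m$. Then $\vec{z}$ is an eigenvector of $T^*$ for $\overline\lambda$.

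For (c), suppose first that $|\lambda|=1$. Pairing the exact identity with $\vec{y}$ and using $T^k\vec{y}=\mu^k\vec{y}$ gives
\begin{equation*}
0=\inner{P_{\N(T)}\vec{x},\vec{y}}=\sum_k(-1)^k\binom{m}{k}\lambda^k\overline{\mu}^k\inner{\vec{x},\vec{y}}=(1-\lambda\overline\mu)^m\inner{\vec{x},\vec{y}}.
\end{equation*}
Since $|\lambda|=1$, we have $\lambda\overline\mu=1$ if and only if $\mu=\lambda$. Therefore $1-\lambda\overline\mu\ne0$ and $\vec{x}\perp\vec{y}$. If instead $|\mu|=1$, I swap the roles of $\vec{x}$ and $\vec{y}$, using $P_{\N(T)}\vec{y}=\vec{0}$.

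For (d), I run the same computation with the approximate eigensequences $\vec{x}_n$ and $\vec{y}_n$. This gives
\begin{equation*}
o_n(1)=\inner{P_{\N(T)}\vec{x}_n,\vec{y}_n}=(1-\lambda\overline\mu)^m\inner{\vec{x}_n,\vec{y}_n}+o_n(1),
\end{equation*}
with $(1-\lambda\overline\mu)^m\neq0$, so $\inner{\vec{x}_n,\vec{y}_n}\to0$.
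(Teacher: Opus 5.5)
Your proposal is correct and follows essentially the same route as the paper: you use Lemma \ref{Lemma:Projection} to get $\norm{P_{\N(T)}\vec{x}_n}\to 0$ (or $P_{\N(T)}\vec{x}=\vec{0}$ for an exact eigenvector), then substitute the (approximate) eigen-relations to extract $(I-\lambda T^*)^m$ or $(1-\lambda\overline\mu)^m$. The only cosmetic difference is in (a), where you work weakly through the polarized identity with errors uniform in $\vec{y}$; the paper instead expands $\beta_m(T)\vec{x}_n$ directly in norm, which is the same computation without the detour through the supremum.
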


\begin{proof}
(a) Let $\lambda\in\apspec(T)\cap\T$ and choose unit vectors $\vec{x}_n$ such that $(T-\lambda I)\vec{x}_n\to0$. The proof of Theorem \ref{Theorem:Spectrum} shows that
\begin{equation*}
\norm{P_{\N(T)}\vec{x}_n}^2
=(1-|\lambda|^2)^m+\vec{o}_n(1)=\vec{o}_n(1).
\end{equation*}
Consequently, $\beta_m(T)\vec{x}_n=(-1)^m P_{\N(T)}\vec{x}_n\to0$. On the other hand, since $T^kx_n=\lambda^kx_n+\vec{o}_n(1)$ for each fixed $k$, we have
\begin{align*}
\beta_m(T)\vec{x}_n
&=\sum_{k=0}^{m}(-1)^{m-k}\binom{m}{k}T^{*k}T^k \vec{x}_n\\
&=\sum_{k=0}^{m}(-1)^{m-k}\binom{m}{k}\lambda^kT^{*k} \vec{x}_n+\vec{o}_n(1) \\
&=(\lambda T^*-I)^m \vec{x}_n+\vec{o}_n(1).
\end{align*}
Thus, $(\lambda T^*-I)^m \vec{x}_n\to0$. Since $|\lambda|=1$, we have $\lambda T^*-I=\lambda(T^*-\overline\lambda I)$, so
\begin{equation*}
(T^*-\overline\lambda I)^m\vec{x}_n\to0.
\end{equation*}
If $T^*-\overline\lambda I$ were bounded below, then so would its $m$th power be, which is impossible because $\norm{\vec{x}_n}=1$. Therefore, $\overline\lambda\in\apspec(T^*)$.

\noindent(b) Suppose that $\lambda\in\pspec(T)\cap\T$ and $T\vec{x}=\lambda \vec{x}$ with $\vec{x}\neq \vec{0}$. Then 
\begin{equation*}
\norm{P_{\N(T)}\vec{x}}^2=(1-|\lambda|^2)^m\norm{\vec{x}}^2=0,
\end{equation*} 
so $P_{\N(T)}\vec{x}=0$ and hence $\beta_m(T)\vec{x}=0$. As above, but with an exact eigenvector,
\begin{equation*}
\beta_m(T)\vec{x}=(\lambda T^*-I)^m\vec{x}.
\end{equation*}
Thus, $(T^*-\overline\lambda I)^m \vec{x}=\vec{0}$. Let $q\geq1$ be minimal such that $(T^*-\overline\lambda I)^q \vec{x}=\vec{0}$. Then $\vec{y}=(T^*-\overline\lambda I)^{q-1} \vec{x} \neq \vec{0}$
and $(T^*-\overline\lambda I)\vec{y}=0$, so $\overline\lambda\in\pspec(T^*)$.

\noindent(c) Without loss of generality, suppose that $|\lambda|=1$. Then $P_{\N(T)}\vec{x}= \vec{0}$ and hence Lemma \ref{Lemma:Projection} yields
\begin{align*}
0
&=\langle P_{\N(T)}\vec{x}, \vec{y}\rangle
=\sum_{k=0}^{m}(-1)^k\binom{m}{k}\inner{T^k \vec{x} , T^k\vec{y}}\\
&=\sum_{k=0}^{m}(-1)^k\binom{m}{k}(\lambda\overline\mu)^k\inner{\vec{x},\vec{y}}
=(1-\lambda\overline\mu)^m\langle \vec{x},\vec{y}\rangle.
\end{align*}
Since $|\lambda|=1$ and $\lambda\neq\mu$, we get $\langle \vec{x}, \vec{y} \rangle=0$. 

\noindent(d) Let $\lambda\in\apspec(T)\cap\T$ and $\mu\in\apspec(T)$ with $\lambda\neq\mu$. Let $\vec{x}_n$ and $\vec{y}_n$ be unit approximate eigensequences for $\lambda$ and $\mu$. As above, $\norm{P_{\N(T)}\vec{x}_n}\to 0$ and
\begin{equation*}
\langle {P_{\N(T)}\vec{x}_n, \vec{y}_n}\rangle
=(1-\lambda\overline\mu)^m\langle \vec{x}_n,\vec{y}_n \rangle+\vec{o}_n(1).
\end{equation*}
The left side tends to $0$ and $1-\lambda\overline\mu\neq0$ since $|\lambda|=1$ and $\lambda\neq\mu$, so $\langle \vec{x}_n,\vec{y}_n\rangle\to0$.
\end{proof}

\begin{coro}\label{cor:unit-circle}
Suppose that $T$ and $T^*$ are $m$-partial isometries. Then
\begin{equation*}
\lambda\in\apspec(T)\cap\T
\quad\Longleftrightarrow\quad
\overline\lambda\in\apspec(T^*)\cap\T
\end{equation*}
and
\begin{equation*}
\lambda\in\pspec(T)\cap\T
\quad\Longleftrightarrow\quad
\overline\lambda\in\pspec(T^*)\cap\T.
\end{equation*}
Consequently,
\begin{equation*}
\sigma(T)\cap\T=\apspec(T)\cap\T
\quad\text{and}\quad
\sigma(T^*)\cap\T=\apspec(T^*)\cap\T.
\end{equation*}
\end{coro}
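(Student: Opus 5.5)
The plan is to apply Proposition \ref{prop:unit-circle}(a) and (b) twice: once to $T$, and once to $T^*$, which is also an $m$-partial isometry by hypothesis.

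\textbf{Forward implications.} These need only that $T$ is an $m$-partial isometry. If $\lambda\in\apspec(T)\cap\T$, then Proposition \ref{prop:unit-circle}(a) gives $\overline\lambda\in\apspec(T^*)$. Since $|\overline\lambda|=1$, this places $\overline\lambda$ in $\apspec(T^*)\cap\T$. The point-spectrum version follows from Proposition \ref{prop:unit-circle}(b) in the same way.

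\textbf{Reverse implications.} Apply Proposition \ref{prop:unit-circle} to the $m$-partial isometry $T^*$ in place of $T$. If $\overline\lambda\in\apspec(T^*)\cap\T$, part (a) yields $\overline{\overline\lambda}=\lambda\in\apspec(T^{**})=\apspec(T)$, and $|\lambda|=1$. Part (b) handles the point spectrum identically. This is the only place where the hypothesis on $T^*$ is used.

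\textbf{The ``consequently'' assertion.} We need $\sigma(T)\cap\T\subseteq\apspec(T)\cap\T$, the reverse inclusion being trivial. Recall the standard decomposition
\[
\sigma(T)=\apspec(T)\cup\{\lambda:\ \overline\lambda\in\pspec(T^*)\},
\]
which holds because if $T-\lambda I$ is bounded below but not invertible, then its range is closed and proper, so $\N(T^*-\overline\lambda I)\neq\{\vec{0}\}$.

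Take $\lambda\in\sigma(T)\cap\T$ and suppose $\lambda\notin\apspec(T)$. Then $\overline\lambda\in\pspec(T^*)\cap\T$. By the second equivalence, $\lambda\in\pspec(T)\subseteq\apspec(T)$, which is a contradiction.

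The same argument with the roles of $T$ and $T^*$ interchanged gives $\sigma(T^*)\cap\T=\apspec(T^*)\cap\T$. Here one uses that $\lambda\in\pspec(T)\cap\T$ implies $\overline\lambda\in\pspec(T^*)$ by Proposition \ref{prop:unit-circle}(b).

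Alternatively, when $m$ is odd, Corollary \ref{cor:outer-boundary-ap} already gives this conclusion directly, since $\T$ is then the outer boundary of the spectral disk. For even $m$, however, $\T$ is not the outer boundary, so the argument above is needed.

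The only potentially delicate step is the spectral decomposition used in the final assertion. It is a standard fact from operator theory, so I expect no real obstacle.
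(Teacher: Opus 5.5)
Your proposal is correct and matches the paper's proof step for step. The equivalences come from applying Proposition~\ref{prop:unit-circle}(a),(b) to $T$ and to $T^*$, and the spectral equality comes from the decomposition $\sigma(T)=\apspec(T)\cup\{\lambda:\overline\lambda\in\pspec(T^*)\}$ combined with the point-spectrum equivalence. Your added justification of that decomposition is accurate, and so is your remark that Corollary~\ref{cor:outer-boundary-ap} already settles the odd-$m$ case.
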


\begin{proof} 
The equivalences follow by applying Proposition~\ref{prop:unit-circle} to $T$ and $T^*$. We prove the spectral equality for $T$; the proof for $T^*$ is identical. We use the decomposition
\begin{equation}\label{eq:spectrum-decomposition}
\sigma(T)=\apspec(T)\cup\{\lambda\in\C:\overline\lambda\in\pspec(T^*)\}.
\end{equation}
Let $\lambda\in\sigma(T)\cap\T$. If $\lambda\notin\apspec(T)$, then \eqref{eq:spectrum-decomposition} implies that $\overline\lambda\in\pspec(T^*)\cap\T$. By Proposition \ref{prop:unit-circle}, $\lambda\in\pspec(T)\subseteq\apspec(T)$, a contradiction. Hence $\sigma(T)\cap\T\subseteq\apspec(T)\cap\T$. The reverse inclusion is automatic.
\end{proof}

\begin{rem} 
Proposition~\ref{prop:unit-circle} and Corollary~\ref{cor:unit-circle} are reduction-free analogues of \cite[Prop.~3.9, Cor.~3.2]{Ahmed}. Their results assume that the relevant kernels are reducing subspaces. Here, no reducing hypothesis is imposed. Under the reducing hypothesis, the nonzero approximate spectral values are
forced onto $\T$, so Proposition~\ref{prop:unit-circle} and Corollary~\ref{cor:unit-circle} recover the
corresponding conclusions of \cite[Prop.~3.9, Cor.~3.2]{Ahmed}. \end{rem}

Since partial isometries have isometric dilations, it is natural to ask whether $m$-partial isometries possess $m$-isometric dilations;
see Question \ref{Question:Dilation} in Section \ref{Section:Further}.

\begin{thm}\label{Theorem:Dilations}
    Let $ T\in \mathcal{B}(\h) $ be an $m$-partial isometry. 
    \begin{enumerate}
        \item If $m$ is even and $r(T) > 1$, then $T$ has no $m'$-isometric dilation for any $m'$.
        \item If $m = 3$, then $T$ has a $3$-isometric dilation.
    \end{enumerate}
\end{thm}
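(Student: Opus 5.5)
For part (a) I would argue by spectral radius. I read ``$T$ has an $m'$-isometric dilation'' as: there is a Hilbert space $\K\supseteq\h$ and an $m'$-isometry $V\in\B(\K)$ with $T^n=P_{\h}V^n|_{\h}$ for all $n\geq 0$. Then $\norm{T^n}\leq\norm{V^n}$ for every $n$, so Gelfand's formula gives $r(T)\leq r(V)$. I would then invoke the standard fact of Agler and Stankus that every $m'$-isometry has spectrum in $\D^-$. One way to see it: $\apspec(V)\subseteq\T$ by the same computation as in Theorem \ref{Theorem:Spectrum}, since there the right side $(1-|\lambda|^2)^{m'}$ is forced to vanish. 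So $r(V)\leq1<r(T)$, which is a contradiction. The only role of ``$m$ even'' is that by Theorem \ref{Theorem:Spectrum} and the examples following it, $r(T)$ can lie in $(1,\sqrt2\,]$ only in the even case; for odd $m$ the hypothesis $r(T)>1$ is vacuous.

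For part (b) I would build an explicit lifting
\begin{equation*}
V=\begin{bmatrix} T & 0\\ C & W\end{bmatrix}\quad\text{on } \h\oplus\K .
\end{equation*}
Being lower triangular with respect to $\h\oplus\K$ makes $\h^{\perp}=\K$ invariant for $V$, hence $\h$ invariant for $V^*$, and so $P_{\h}V^n|_{\h}=T^n$ automatically. The input from $m=3$ is Lemma \ref{Lemma:Projection}: $-\beta_3(T)=P_{\N(T)}\geq0$. In norms this reads
\begin{equation*}
\norm{\vec{x}}^2-3\norm{T\vec{x}}^2+3\norm{T^2\vec{x}}^2-\norm{T^3\vec{x}}^2=\norm{P_{\N(T)}\vec{x}}^2 .
\end{equation*}
So $T$ is a ``$3$-expansive defect'' operator whose defect is exactly the projection onto its kernel. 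I would take $\K=\ell^2(\mathbb{N},\mathcal{D})$, where $\mathcal{D}$ is a Hilbert space receiving $\R(\beta_3(T))=\N(T)$, possibly several copies of $\N(T)$ to accommodate the second-order terms. I would let $W$ be an operator-weighted shift that is a $3$-isometry, using the Abdullah--Le description with $a_n=p(n+1)/p(n)$ for a quadratic $p$. Finally I would take $C\vec{x}=(\alpha P_{\N(T)}\vec{x},\gamma P_{\N(T)}T\vec{x},0,\dots)$ or a similar finite-length injection, with constants to be tuned.

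The verification is to show that $\sum_{k=0}^{3}(-1)^{3-k}\binom3k\norm{V^k(\vec{x}\oplus\vec{y})}^2=0$ for all $\vec{x}\in\h$ and $\vec{y}\in\K$. I would expand
\begin{equation*}
V^k(\vec{x}\oplus\vec{y})=T^k\vec{x}\oplus\Big(W^k\vec{y}+\sum_{j<k}W^{k-1-j}CT^{j}\vec{x}\Big).
\end{equation*}
The $\vec{y}$-only terms vanish because $W$ is a $3$-isometry. The $\vec{x}$-only terms contribute $-\norm{P_{\N(T)}\vec{x}}^2$ from the $T$-part, plus the quantities $\norm{W^{k-1-j}CT^j\vec{x}}^2$; these must combine to cancel it. The main obstacle is this cancellation, together with the mixed terms $\operatorname{Re}\inner{W^k\vec{y},W^{k-1-j}CT^j\vec{x}}$. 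My first attempt would be to place the defect $P_{\N(T)}\vec{x}$ in the first slot and choose the initial weights of $W$ so that the third finite difference in $k$ of $\norm{\sum_j W^{k-1-j}CT^j\vec{x}}^2$ equals exactly $\norm{P_{\N(T)}\vec{x}}^2$. Because the sequence $k\mapsto\norm{V^k\vec{x}}^2$ must be a quadratic polynomial in $k$, the cancellation amounts to solving for three scalar parameters, namely the constants in $C$ and the first weight of $W$. If the mixed terms refuse to cancel, I would instead try a Buchala/Badea--Suciu style criterion. That criterion says an operator with $\beta_3(T)\leq0$, satisfying a suitable compatibility between $\beta_2(T)$ and $\beta_3(T)$, admits a $3$-isometric lifting, and I would check that compatibility directly using $T\beta_3(T)=0$.
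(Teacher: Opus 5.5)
Your part (a) is correct and is essentially the paper's argument. Both use $\norm{T^n}\le\norm{V^n}$ for a dilation $V$ and then show $r(V)\le 1$. The paper gets this from Agler's growth bound $\norm{V^n}^2=O(n^{m'-1})$; you get it from $\apspec(V)\subseteq\T$ together with Lemma~\ref{boundarycompact}, which works equally well.

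Part (b) has a genuine gap. You isolate the right input, $-\beta_3(T)=P_{\N(T)}\geq 0$ from Lemma~\ref{Lemma:Projection}, but you never turn it into a dilation. The missing ingredient is the result the paper cites, \cite[Thm.~3.6]{Buchala}. As the paper uses it, it gives a $3$-isometric dilation for any $T$ with $\beta_3(T)\leq 0$, with no further hypothesis, so the proof is one line. Your fallback recalls this only vaguely and adds an unspecified ``compatibility between $\beta_2(T)$ and $\beta_3(T)$'' that you neither state nor verify, so it proves nothing as written. Your primary route is left at ``constants to be tuned,'' and the heuristic behind it is wrong. The condition $\beta_3(V)=0$ is an operator identity: you must match the quadratic forms $\norm{P_{\N(T)}T^j\vec{x}}^2$ for all $\vec{x}$ at once, not solve three scalar equations.

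In fact your ansatz fails as stated. Let $W$ have scalar weights $w_0,w_1,\ldots$ with $b_n=w_n^2$, let $P=P_{\N(T)}$, and let $C\vec{x}=(\alpha P\vec{x},\gamma PT\vec{x},0,\ldots)$. The $(2,1)$ block of $\beta_3(V)$, namely $\sum_k(-1)^{3-k}\binom3k W^{*k}C_k$, sends $\vec{x}$ to $w_0(3-3b_1+b_1b_2)\gamma PT\vec{x}$ in slot $0$. Since $\beta_3(W)=0$ gives $b_0(3-3b_1+b_1b_2)=1$, this forces $\gamma PT=0$. The $(1,1)$ block then requires, for all $\vec{x}$,
\[
\alpha^2\big[(3-3b_0+b_0b_1)\norm{P\vec{x}}^2+(b_0-3)\norm{PT\vec{x}}^2+\norm{PT^2\vec{x}}^2\big]=\norm{P\vec{x}}^2 .
\]
Now take the $3$-partial isometry on $\C^3$ given by $T\vec{e}_1=2^{-1/2}\vec{e}_2$, $T\vec{e}_2=3^{-1/2}\vec{e}_3$, $T\vec{e}_3=\vec{0}$. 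At $\vec{x}=\vec{e}_1$ the identity reads $\alpha^2/6=0$, so $\alpha=0$. At $\vec{x}=\vec{e}_3$ it then reads $0=1$. Making an explicit lifting work in general amounts to reproving Bucha{\l}a's theorem; the intended proof is simply to invoke it.
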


\begin{proof}
    (a) If $T$ has an $m'$-isometric dilation, then $\|T^n\|^2/ n^{m'-1}$ is bounded as $n\to \infty$ \cite[p.~389]{Agler1}. 
    However, the spectral-radius formula ensures that
    $1<r(T) \leq \| T^n\|^{1/n}$ for all $n \geq 1$,
    so the powers of $T$ grow exponentially.

    \noindent(b) Lemma \ref{Lemma:Projection} implies that $\beta_3(T) \leq 0$ and \cite[Thm.~3.6]{Buchala}
    ensures that $T$ has a $3$-isometric dilation.
\end{proof}

\section{Similarity}\label{Section:Similarity}

We turn our attention to the problem of similarity to an $m$-isometry or $m$-partial isometry. 
The next result proves that the challenge of similarity to an $m$-isometry (or an $m$-partial isometry) can be interpreted as a renormalization problem.

\begin{thm}\label{ThmSimilarToMIsometryRenorming}
An operator $T \in \B(\h)$ is similar to an $m$-isometry (respectively, $m$-partial isometry) if and only if there exists an equivalent Hilbert norm such that $T$ is an $m$-isometry (respectively, $m$-partial isometry) with respect to this norm.
\end{thm}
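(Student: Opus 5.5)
The plan is to translate between similarity and a change of inner product, treating both directions at once. The dictionary is this: an equivalent Hilbert norm on $\h$ is one of the form $\newnorm{\vec{x}}^2=\inner{G\vec{x},\vec{x}}$ for some positive invertible $G\in\B(\h)$. Given such a norm, the parallelogram law and polarization yield an inner product $\newinner{\vec{x},\vec{y}}$. This inner product is bounded and coercive with respect to $\inner{\cdot,\cdot}$, so the Riesz representation theorem (Lax--Milgram) produces $G$. Conversely, every positive invertible $G$ defines an equivalent Hilbert norm. Let $A=G^{1/2}$, which is positive and invertible. The map $A:(\h,\newinner{\cdot,\cdot})\to(\h,\inner{\cdot,\cdot})$ is then a unitary, because $\newinner{\vec{x},\vec{y}}=\inner{A\vec{x},A\vec{y}}$.

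For the direction ``norm $\Rightarrow$ similarity'', suppose $T$ is an $m$-isometry, or an $m$-partial isometry, with respect to $\newinner{\cdot,\cdot}$. Set $R=ATA^{-1}$. The properties in question are preserved by unitary equivalence, since $\beta_m$ transforms covariantly: $\beta_m(UTU^*)=U\beta_m(T)U^*$ for a unitary $U$, and hence $(UTU^*)\beta_m(UTU^*)=UT\beta_m(T)U^*$. It follows that $R$ is an $m$-isometry (respectively, $m$-partial isometry) on $(\h,\inner{\cdot,\cdot})$. Thus $T=A^{-1}RA$ is similar to such an operator.

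A more hands-on way to see this is to compute directly. The adjoint of $T$ with respect to $\newinner{\cdot,\cdot}$ is $T^\sharp=G^{-1}T^*G$. From this, $\beta_m^\sharp(T)=G^{-1}\sum_k(-1)^{m-k}\binom{m}{k}T^{*k}GT^k$. Substituting $G=A^2$ shows that $A\beta_m^\sharp(T)A^{-1}=\beta_m(ATA^{-1})$. The condition $T\beta^\sharp_m(T)=0$ is therefore equivalent to $R\beta_m(R)=0$, and $\beta_m^\sharp(T)=0$ is equivalent to $\beta_m(R)=0$.

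For the converse, suppose $T=X^{-1}RX$ with $X$ invertible and $R$ an $m$-isometry (respectively, $m$-partial isometry). Define $\newnorm{\vec{x}}=\norm{X\vec{x}}$. The bounds $\norm{X^{-1}}^{-1}\norm{\vec{x}}\le\newnorm{\vec{x}}\le\norm{X}\norm{\vec{x}}$ show it is equivalent to the original norm, and it comes from the inner product $\inner{X\cdot,X\cdot}$. The operator $X:(\h,\newinner{\cdot,\cdot})\to(\h,\inner{\cdot,\cdot})$ is unitary and intertwines $T$ with $R$. Hence, by the same covariance, $T$ has the corresponding property in the new norm.

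An alternative route through the polar decomposition $X=U|X|$ would give the intertwiner $|X|$ and unitarily equivalent targets, but it is unnecessary.

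The only step that needs real care is the identification of the new adjoint. One must check that the definitions of $\beta_m$ and of the $m$-partial isometry condition refer only to the inner-product structure, namely the adjoint. Both are therefore invariant under the unitary $A$ or $X$ between the two Hilbert spaces. Everything else is routine bookkeeping.
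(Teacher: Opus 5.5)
Your proof is correct and has the same skeleton as the paper's. You represent the new inner product by a positive invertible Gram operator and conjugate by its square root; in the other direction you use $\newnorm{\vec{x}}=\norm{X\vec{x}}$. In the norm-to-similarity direction the paper writes $A$ for your $G$ and $P$ for your $A$.

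The difference is in how you check that the defining condition survives the change of inner product.

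\emph{The paper's route.} The paper stays with the scalar expression $\sum_{k}(-1)^k\binom{m}{k}\newnorm{T^{m-k}\vec{x}}^2$. It requires this to vanish for all $\vec{x}$ in the $m$-isometry case, and for $\vec{x}\in\N(T)^{\perp_{\mathrm{New}}}=P^{-1}(\N(PTP^{-1})^\perp)$ in the partial case, and then substitutes $\vec{x}=P^{-1}\vec{y}$. For $m$-partial isometries this tacitly uses the following fact, which the paper never states: $T\beta_m(T)=0$ holds exactly when the quadratic form of $\beta_m(T)$ vanishes on $\N(T)^\perp$. The fact is true. Indeed, $\beta_m(T)$ acts as $(-1)^mI$ on $\N(T)$ and is selfadjoint, so it is block diagonal with respect to $\N(T)\oplus\N(T)^\perp$.

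\emph{Your route.} You compute $T^\sharp=G^{-1}T^*G$ and prove the operator identity $A\beta_m^\sharp(T)A^{-1}=\beta_m(ATA^{-1})$, together with the analogous covariance under $X$. This has three advantages:
\begin{enumerate}
\item it handles $m$-isometries and $m$-partial isometries uniformly;
\item it needs no characterization of the condition through $\N(T)^\perp$;
\item it supplies the verification that the paper merely asserts in its similarity-to-norm direction.
\end{enumerate}
In turn, the paper's norm-based phrasing makes the $m$-isometry case immediate without ever computing an adjoint.
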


\begin{proof}
Suppose that $T=P^{-1} S P$, in which $P \in \B(\h)$ is invertible and $S$ is an $m$-isometry (respectively, $m$-partial isometry). Then $\newnorm{\vec{x}} = \langle P^{*} P x, x \rangle^{1 / 2}$ is an equivalent Hilbert norm and $T$ is an $m$-isometry (respectively, $m$-partial isometry) with respect to this norm.
For the converse, suppose that $\newnorm{\,\cdot\,}$ is a Hilbert norm with inner product $\newinner{\, \cdot\, ,\, \cdot\,}$. Then there is a positive, invertible $A \in \B(\h)$ such that
$\newinner{ \vec{x}, \vec{y} } = \langle A \vec{x}, \vec{y} \rangle$.  Let $P = A^{1/2}$.  Then
for any $\vec{x} \in \h$ (respectively, for any $\vec{x} \in \N(T)^{\perp_{\textrm{New}}}=P^{-1}((P\mathcal N(T))^\perp)=P^{-1}(\mathcal N(PTP^{-1})^\perp))$,
\begin{equation*}
\sum_{k=0}^{m} (-1)^{k} \binom{m}{k} \, \newnorm{ T^{m-k} \vec{x}}^2 = 0
\iff
\sum_{k=0}^{m} (-1)^{k} \binom{m}{k} \, \|P T^{m-k} \vec{x}\|^2 = 0 .    
\end{equation*}
Set $\vec{x} = P^{-1} \vec{y}$ in the last equality and 
we conclude that $T$ is similar to an $m$-isometry (respectively, $m$-partial isometry).
\end{proof}

The spectrum of an invertible $m$-isometry is contained in the unit circle \cite[Lem.~1.21]{Agler1}, hence so is the spectrum of any operator similar to an invertible $m$-isometry. Below we show that, in a sense, the converse result is also true in finite-dimensional spaces.

\begin{thm}
\label{thm:m_isometries_finite_dim}
  Let $ \h $ be a finite-dimensional Hilbert space and let $T\in \B(\h) $. Let $m \geq 1$ be odd. The following are equivalent.
  \begin{enumerate}
\item $T$ is similar to an $m$-isometry.

\item There is an equivalent Hilbert norm with respect to which $T$ is an $m$-isometry.

\item $\sup_{n\in \Z_{\neq 0}} \frac{\| T^{n}\|^{2}}{|n|^{m-1}} < \infty$.
\item $\sigma(T) \subset \mathbb{T} $ and the maximum Jordan block size of $T$ is at most $\frac{m+1}{2}$.
  \end{enumerate}
\end{thm}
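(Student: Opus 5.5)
The plan is to prove the cycle (a) $\Leftrightarrow$ (b), (a) $\Rightarrow$ (c) $\Rightarrow$ (d) $\Rightarrow$ (a). The equivalence (a) $\Leftrightarrow$ (b) is exactly Theorem \ref{ThmSimilarToMIsometryRenorming}, so nothing new is needed there. Oddness of $m$ is used only to make $\frac{m+1}{2}$ an integer, so that the block bound in (d) is exactly tied to the growth exponent $m-1$ in (c).

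\emph{(a) $\Rightarrow$ (c).} Let $T = P^{-1}SP$ with $S$ an $m$-isometry. Since $\|T^n\|\le \|P\|\|P^{-1}\|\|S^n\|$, it suffices to treat $S$. First I show $S$ is injective: if $S\vec{x}=\vec{0}$, then $\langle \beta_m(S)\vec{x},\vec{x}\rangle = (-1)^m\|\vec{x}\|^2=0$, so $\vec{x}=\vec{0}$. In finite dimensions $S$ is therefore invertible. By \eqref{eq:InverseAlso}, $S^{-1}$ is also an $m$-isometry. Fix $\vec{x}$ and set $f(n)=\|S^n\vec{x}\|^2$ for $n\in\Z$. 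Applying \eqref{eq:NormCondition} to $S^{n}\vec{x}$ shows that the $m$th forward difference of $f$ vanishes at every integer. Hence $f$ agrees on $\Z$ with a polynomial of degree at most $m-1$. Bounding its coefficients uniformly over unit vectors $\vec{x}$ (by compactness of the unit sphere, or by writing $S^{*n}S^n$ as an operator polynomial in $n$) gives $\|S^n\|^2\le C|n|^{m-1}$ for $n\neq 0$.

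\emph{(c) $\Rightarrow$ (d).} Condition (c) presupposes that $T^{-1}$ exists. Since $\|T^{\pm n}\|^{1/n}\to 1$, we get $r(T)\le 1$ and $r(T^{-1})\le 1$, hence $\sigma(T)\subset\T$. Pass to Jordan form $T=V^{-1}JV$; the bound in (c) transfers to $J$ up to a constant. Take a Jordan block $\lambda I+N$ of size $k$ with $|\lambda|=1$ and let $\vec{e}_k$ be the top basis vector of the chain. The coefficient of the last basis vector in $(\lambda I+N)^n\vec{e}_k$ is $\binom{n}{k-1}\lambda^{n-k+1}$, so $\|J^n\|\ge \binom{n}{k-1}\gtrsim n^{k-1}$. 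Comparing with (c) gives $2(k-1)\le m-1$, that is, $k\le \frac{m+1}{2}$. I expect this to be the step needing the most care: the constants from the similarity must be tracked, and the growth estimate must be a genuine lower bound rather than merely a norm upper bound.

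\emph{(d) $\Rightarrow$ (a).} Write $T\sim\bigoplus_j(\lambda_j I+N_j)$ with $|\lambda_j|=1$ and $N_j^{k_j}=0$, where $k_j\le\frac{m+1}{2}$. It suffices to show each block is an $m$-isometry, since a direct sum of $m$-isometries is an $m$-isometry. For a block, expand
\[
(\lambda I+N)^n=\lambda^n\sum_{i<k}\binom{n}{i}\lambda^{-i}N^i .
\]
Then $\|(\lambda I+N)^n\vec{x}\|^2$ is a polynomial in $n$ of degree at most $2(k-1)\le m-1$, because $|\lambda^n|=1$. Its $m$th difference therefore vanishes, which is \eqref{eq:NormCondition}. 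Hence each block is an $m$-isometry, and $T$ is similar to an $m$-isometry.
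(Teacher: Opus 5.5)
Your proposal is correct and runs the same cycle as the paper's proof. Like the paper, you get (a)$\Leftrightarrow$(b) by renorming, (a)$\Rightarrow$(c) from invertibility plus polynomial growth of $\|S^{n}\vec{x}\|^2$, (c)$\Rightarrow$(d) from the spectral radius and the $\binom{n}{k-1}$ growth inside a Jordan block, and (d)$\Rightarrow$(a) from the Jordan form. The only difference is that you prove the paper's cited ingredients by hand, which makes your version self-contained: you use injectivity of an $m$-isometry instead of Agler's inclusion $\sigma(S)\subseteq\T$, the finite-difference argument instead of Agler's growth estimate, and a direct degree count on each block $\lambda I+N$ instead of the theorem that a unitary plus a commuting nilpotent of order at most $\frac{m+1}{2}$ is an $m$-isometry.
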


\begin{proof}
(a) $\Leftrightarrow$ (b) This equivalence follows from two facts.
First, every equivalent Hilbert norm on $\h$ is of the form $\newnorm{\vec{x}}=\norm{A\vec{x}}$
for a positive invertible $A \in \B(\h)$.  Second, if $T = Q^{-1}SQ$ and $Q = UA$, in which $U$ is unitary and $A$ is positive and invertible, then $T = P^{-1}(U^*SU)P$.  Since $m$-isometries are preserved by unitary equivalence, the desired result follows from a straightforward use of \eqref{eq:NormCondition}.

\smallskip\noindent(a) $\Rightarrow$ (c) Suppose that $T$ is similar to an $m$-isometry.  
Then $\sigma(T) \subset \T$ \cite[Lem.~1.21]{Agler1}, so $T$ is invertible.
The proof of \cite[Prop.~1.5]{Agler1} ensures that $\norm{T^n}^2/n^{m-1}$ is bounded as $n \to \infty$.
Since \eqref{eq:InverseAlso} ensures that $T^{-1}$ is similar to an $m$-isometry, we obtain (c).

\smallskip\noindent(c) $\Rightarrow$ (d)
Since $ r(T) = \lim_{n\to \infty} \sqrt[n]{\| T^{n}\|}$, we infer from (c) that $ r(T) = r(T^{-1}) = 1 $. Using the fact that $\sigma(T^{-1}) = \{ 1/ \lambda : \lambda \in \sigma(T) \}$, it follows that $\sigma(T) \subseteq \T$. 
Suppose that the largest Jordan block of $T$
is $J_k(\lambda)$ and $\vec{v}\in \h$ is a generalized eigenvector of type $k$ \cite[p. 357]{Bronson}. 
Then \cite[Sec. 9.7 and 9.8]{Bronson} ensure that for every $n\geq 1$, 
\begin{equation*}
    T^n \vec{v} = \sum_{j=0}^{k-1} \binom{n}{j}\lambda^{n-j}(T-\lambda I)^j \vec{v}.
\end{equation*}
Thus, $\norm{T^{n}\vec{v}}^{2}$ is a polynomial in $n$ of degree $2k-2$. Condition (c) ensures that $2k-2 \leq m-1$; that is, $k \leq \frac{m+1}{2}$.

\smallskip\noindent (d) $\Rightarrow$ (a)
Without loss of generality, suppose that $\h = \C^{n}$ endowed with the standard inner product. Then
$T = P^{-1}JP$, in which $P \in \M_n$ is invertible and 
$J = J_1 \oplus J_2 \oplus \cdots \oplus J_p$ with each
$J_i$ a Jordan block for some $\lambda \in \sigma(T)$.
Condition (d) ensures that $T$
is similar to $V + N$, in which $V$ is unitary, $N$ is nilpotent of order at most $\frac{m+1}{2}$, and $VN = NV$. Thus, $T$ is $m$-isometric by \cite[Thm.~2.2]{Bermudez}.
\end{proof}

The previous result does not hold if $\h$ is infinite dimensional.

\begin{exem}
  Let $ m\geq 2 $ and define
  \begin{equation*}
    \omega_{n} = 
    \begin{cases}
      1 & \text{if $n\leq 1$},\\
      \frac{n^{m-1}}{(n-1)^{m-1}} & \text{if $n \geq 2$}.
    \end{cases}
  \end{equation*}
  Let $T \in \B(\ell^{2}(\Z))$ be the bilateral weighted shift with weights $\omega_n$; that is,
  $T\vec{e}_{n} = \omega_{n+1}\vec{e}_{n+1}$ for $n\in \Z$.  Since $\omega_n \to 1$ as $n \to \pm \infty$, it follows that 
  $\sigma(T) = \T$ \cite[Prop.~15 \& Thm.~5]{Shields}.  In particular, $T$ is invertible.
  For $p \geq 1$, observe that
  \begin{equation*}
    T^{p} \vec{e}_{n} = 
    \begin{cases}
      \frac{(n+p)^{m-1}}{n^{m-1}}\vec{e}_{n+1} & \text{for $n\geq 1$},\\
      (n+p)^{m-1}\vec{e}_{n+1} & \text{for $-p+1 < n < 1$},\\
      \vec{e}_{n+1} & \text{for $n\leq -p+1$}.
    \end{cases}
  \end{equation*}
  Thus,
  \begin{equation}\label{ExFormPolyPositivePowers}
    \| T^{p}\|^{2} = (1+p)^{m-1}.
  \end{equation}
  Since $\omega_{n} \geq 1$ for each $n\in\Z$, we have $\norm{T\vec{x}} \geq \norm{\vec{x}}$ for $\vec{x}\in \ell^{2}(\Z)$. Hence $\norm{T^{-p}} \leq 1$ for $p\geq 1$. 
  Combining this with \eqref{ExFormPolyPositivePowers} shows that condition (c) of Theorem \ref{thm:m_isometries_finite_dim} holds.
  
  Suppose toward a contradiction that there is an $m$-isometry $V\in \B(\h)$ such that $V = STS^{-1}$ for some invertible $S \in \B(\h)$. Since $T$ is invertible, so is $V$ and hence $V^{-1}$ is $m$-isometric; see \eqref{eq:InverseAlso}. 
  For every $\vec{u}\in \ell^{2}(\Z)$ there is a polynomial $p_{\vec{u}}\in \mathbb{R}[x] $ of degree at most $m-1$ such that $p_{\vec{u}}(n) = \norm{ V^{-n}S\vec{u} }^{2}$ for all
  $n\geq 0$ \cite[p.~389]{Agler1}. Then for all $\vec{u} \in \ell^2(\Z)$ and $n \geq 0$,
  \begin{equation*}
    p_{\vec{u}}(n) = \norm{ V^{-n}S\vec{u} }^{2} = \norm{ST^{-n}\vec{u} }^{2} \leq \norm{ S }^{2} \norm{ \vec{u}}^{2}.
  \end{equation*}
  The polynomial $ p_{\vec{u}}$ is bounded and hence constant. Thus, $\norm{V^{-1}S\vec{u}}^{2} = p_{\vec{u}}(1) = p_{\vec{u}}(0) = \norm{ S\vec{u}}^{2}$ for all $\vec{u}\in \ell^{2}(\Z)$, which means that $V^{-1}$ isometric.  Since $V$ is also invertible, it is unitary.
  However, \eqref{ExFormPolyPositivePowers} ensures that
  \begin{equation*}
    (1+n)^{m-1} = \lVert T^{n}\rVert^{2}\leq \lVert S\rVert^{2}\lVert S^{-1}\rVert^{2}\lVert V^{n}\rVert^{2} = \lVert S\rVert^{2}\lVert S^{-1}\rVert^{2}
  \end{equation*}
  for all $n \geq 1$, which contradicts the assumption that $m \geq 2$.
\end{exem}

The main result of this section is the following theorem, which describes strict 
$m$-isometries on finite-dimensional Hilbert spaces.

\begin{thm}\label{thm:similar_strict_m}
Let $\h$ be an $n$-dimensional Hilbert space and
let $m$ be an odd integer such that $3 \leq m \leq 2n-1$. Then 
$T \in \B(\h)$ is similar to a strict $m$-isometry if and only if
$T = U + Q$, in which
$U$ is diagonalizable with at most $\frac{2n+1-m}{2}$ distinct unimodular eigenvalues,
$Q$ is nilpotent of order $\frac{m+1}{2}$, and $UQ = QU$.
\end{thm}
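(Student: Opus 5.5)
The plan is to reduce everything to Jordan structure, using Theorem \ref{thm:m_isometries_finite_dim} together with the structure theorem \cite[Thm.~2.2]{Bermudez}. That theorem says an operator $V+N$, with $V$ unitary, $N$ nilpotent of order $k$ and $VN=NV$, is an $m$-isometry for every $m\ge 2k-1$. The first step is to sharpen this to: \emph{$V+N$ is a strict $(2k-1)$-isometry}.

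To prove the sharpening, I will use the polynomial characterization \cite[p.~389]{Agler1}: an operator is an $m$-isometry if and only if $n\mapsto \norm{T^n\vec{x}}^2$ is a polynomial of degree at most $m-1$ for every $\vec{x}$. Since $V$ and $N$ commute and $V$ is unitary,
\begin{equation*}
\norm{(V+N)^n\vec{x}}^2=\Big\|\sum_{j=0}^{k-1}\binom{n}{j}V^{n-j}N^j\vec{x}\Big\|^2 .
\end{equation*}
If $N^{k-1}\vec{x}\neq\vec{0}$, this is a polynomial in $n$ of exact degree $2k-2$, with leading coefficient $\norm{N^{k-1}\vec{x}}^2/((k-1)!)^2>0$. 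Hence $V+N$ is not a $(2k-2)$-isometry. I expect this strictness computation, and in particular cleanly justifying that the leading coefficient survives, to be the main point needing care. Everything else is bookkeeping with Jordan forms.

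\emph{Forward direction.} Suppose $T=P^{-1}SP$ with $S$ a strict $m$-isometry. By Theorem \ref{thm:m_isometries_finite_dim}, $\sigma(S)\subseteq\T$ and the Jordan blocks have size at most $\frac{m+1}{2}$. Since $\sigma(S)\subseteq\T$, the Jordan form lets me write $S\sim V+N$ with $V$ diagonal unitary, $N$ nilpotent and $VN=NV$. Because similarity preserves the Jordan form, the order $k$ of $N$ equals the maximal Jordan block size of $S$, which is also that of $T$. By the first step $S$ is then a strict $(2k-1)$-isometry, so strictness forces $2k-1=m$, that is, $k=\frac{m+1}{2}$. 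Pulling back along the similarity gives $T=U+Q$ with $U$ diagonalizable with unimodular eigenvalues, $Q$ nilpotent of order $\frac{m+1}{2}$, and $UQ=QU$. For the eigenvalue count, one eigenvalue carries a Jordan block of size $\frac{m+1}{2}$. The remaining $n-\frac{m+1}{2}$ dimensions can host at most that many further distinct eigenvalues. So there are at most $1+n-\frac{m+1}{2}=\frac{2n+1-m}{2}$ distinct eigenvalues.

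\emph{Converse.} Given $T=U+Q$ as stated, note that $Q$ leaves each eigenspace of $U$ invariant. So on the eigenspace for $\lambda$, $T$ acts as $\lambda I+Q|$, and the Jordan form of $T$ consists of blocks at unimodular eigenvalues. The largest block size equals the order of $Q$, namely $\frac{m+1}{2}$. Conjugating to Jordan form then gives $T\sim V+N$ with $V$ diagonal unitary, $N$ commuting with $V$ and nilpotent of order $\frac{m+1}{2}$. By the first step, $V+N$ is a strict $m$-isometry. The hypothesis on the number of distinct eigenvalues is automatically compatible with this and is not needed in this direction; it simply records the constraint forced by dimension. The condition $3\le m\le 2n-1$ guarantees that $1\le\frac{m+1}{2}\le n$, so such a $Q$ can exist.
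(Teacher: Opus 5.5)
\textbf{There is a genuine gap in the forward direction.} Your converse is fine: you conjugate $U+Q$ to $V+N$ with $V$ diagonal unitary before invoking strictness, which is slightly more explicit than the paper's one-line citation of \cite[Thm.~2.2]{Bermudez}. Your strictness computation for $V+N$ is also correct, and your eigenvalue count matches the paper's.

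The gap is the sentence ``By the first step $S$ is then a strict $(2k-1)$-isometry.'' Your first step is a statement about $V+N$, and $S$ is only \emph{similar} to $V+N$. Being a (strict) $j$-isometry is not a similarity invariant; for instance, an operator similar to a unitary need not be an isometry. So nothing about the isometric index transfers from $V+N$ to $S$ this way. Theorem \ref{thm:m_isometries_finite_dim} is itself similarity-invariant, so it only gives you $k\le\frac{m+1}{2}$. The reverse inequality $k\ge\frac{m+1}{2}$ is exactly what is missing. The paper gets it from \cite[Thm.~2.7]{Bermudez}, which describes the strict $m$-isometry $S$ itself, not just up to similarity, as a unitary plus a commuting nilpotent of order $\frac{m+1}{2}$.

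You can close the gap inside your own framework, but the argument must use that $S$ itself is an $m$-isometry.
\begin{enumerate}
\item For each $\vec{x}$, the sequence $n\mapsto\norm{S^n\vec{x}}^2$ is a polynomial, by the characterization in \cite[p.~389]{Agler1}.
\item The Jordan form gives $\norm{S^n}\le C\,n^{k-1}$, so each such polynomial is $O(n^{2k-2})$ and hence has degree at most $2k-2$.
\item Therefore the $(2k-1)$-th finite difference of each of these sequences vanishes, so $S$ is a $(2k-1)$-isometry.
\item If $k<\frac{m+1}{2}$, then $2k-1\le m-2$, so $S$ would be an $(m-1)$-isometry, contradicting strictness.
\end{enumerate}
This repair uses only the upper degree bound. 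The strictness of $V+N$ from your first step is needed only in the converse direction.
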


\begin{proof}
\noindent $(\Leftarrow)$  This follows from \cite[Thm.~2.2]{Bermudez}.

\noindent$(\Rightarrow)$ 
Suppose that $T \in \B(\h)$ is similar to a strict $m$-isometry.
On a finite-dimensional Hilbert space, strict $m$-isometries with $m$ odd are precisely those operators of the form $U+Q$, in which $U$ is unitary, $Q$ is nilpotent of order $\frac{m+1}{2}$, and $UQ = QU$ \cite[Thm.~2.7]{Bermudez}.
Thus, there exists an invertible $P$, unitary $U$, and nilpotent $Q$ of order $\frac{m+1}{2}$ such that
$T = PUP^{-1} + PQP^{-1}$ and $UQ = QU$.
Then $PUP^{-1}$ is diagonalizable with unimodular eigenvalues, $PQP^{-1}$ is nilpotent of order $\frac{m+1}{2}$, and $PUP^{-1}$ and $PQP^{-1}$ commute.
Suppose that $T$ has $r$ distinct eigenvalues.
Aside from the largest Jordan block, the remaining $r-1$ eigenvalues account for at least $r-1$ dimensions. Therefore, the largest Jordan block has size at most $n-(r-1)$.
If $r \geq \frac{2n+3-m}{2}$, then $n-(r-1) \leq \frac{m-1}{2}<\frac{m+1}{2}$, which contradicts \cite[Thm.~2.7]{Bermudez}.
\end{proof}

\begin{coro}
Let $\h$ be an $n$-dimensional Hilbert space.
Then $T \in \B(\h)$ is similar to a strict $(2n-1)$-isometry if and only if $T = \alpha I + Q$,
in which $\alpha \in \T$ and $Q$ is nilpotent of order $n$.
\end{coro}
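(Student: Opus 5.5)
The plan is to specialize Theorem~\ref{thm:similar_strict_m} to $m = 2n-1$, which is the largest odd value the theorem allows. The hypothesis $3 \leq m \leq 2n-1$ requires $n \geq 2$, so I first dispose of the case $n=1$ separately.

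When $n = 1$ we have $m = 1$. A strict $1$-isometry on $\C$ is an isometry, that is, multiplication by some $\alpha \in \T$. Similarity on $\C$ is trivial, so $T$ is similar to a strict $1$-isometry exactly when $T = \alpha I$ with $\alpha \in \T$. This is $\alpha I + Q$ with $Q = 0$, which is nilpotent of order $1$ (here $\mathcal{H}^0 = I$ convention aside, the zero operator is the order-one case). So the statement holds.

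For $n \geq 2$, set $m = 2n-1$, so that $3 \leq m \leq 2n-1$ holds. Theorem~\ref{thm:similar_strict_m} then says that $T$ is similar to a strict $(2n-1)$-isometry if and only if $T = U + Q$, where:
\begin{itemize}
\item[(i)] $U$ is diagonalizable with at most $\frac{2n+1-(2n-1)}{2} = 1$ distinct unimodular eigenvalue;
\item[(ii)] $Q$ is nilpotent of order $\frac{(2n-1)+1}{2} = n$;
\item[(iii)] $UQ = QU$.
\end{itemize}

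A diagonalizable operator with a single eigenvalue $\alpha$ is similar to $\alpha I$, hence equals $\alpha I$. So condition (i) says exactly that $U = \alpha I$ with $\alpha \in \T$. Condition (iii) is then automatic, since every operator commutes with $\alpha I$.

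Conversely, if $T = \alpha I + Q$ with $\alpha \in \T$ and $Q$ nilpotent of order $n$, take $U = \alpha I$. This $U$ satisfies (i) and (iii), and $Q$ satisfies (ii), so the reverse implication of the theorem applies. This proves both directions.

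The only point needing care is the reduction "diagonalizable with one eigenvalue implies scalar", which is immediate from $U = P(\alpha I)P^{-1} = \alpha I$. A nilpotent of order $n$ on an $n$-dimensional space is a single Jordan block, but this fact is not needed for the argument.
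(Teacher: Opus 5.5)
Your proposal is correct and follows the paper's proof: specialize Theorem~\ref{thm:similar_strict_m} to $m=2n-1$, so $U$ is diagonalizable with a single unimodular eigenvalue and hence equals $\alpha I$. Your separate treatment of $n=1$, where the theorem's hypothesis $m\geq 3$ fails, covers a case the paper's one-line proof passes over.
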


\begin{proof}
    Let $m = 2n-1$ above and deduce that $U$ has exactly one eigenvalue.
\end{proof}

For each $A, B \in \B(\h)$, let 
$L_{A}, R_{B} \in \B(\B(\h))$ denote left multiplication by $A$ and right multiplication by $B$, respectively, defined for $X \in \B(\h)$ by
\begin{equation*}
L_{A}(X) = A X 
\qquad \text{and} \qquad 
R_{B}(X) = X B.   
\end{equation*}   
For $m \in \mathbb{Z}_{\geq 1}$, define
\begin{align*}
\Delta_{A^{*}, B}^{m}(X)
&= (L_{A^{*}} R_{B} - I)^{m}(X) \\
&= \bigg( \sum_{k=0}^{m} (-1)^{k} \binom{m}{k} (L_{A^{*}} R_{B})^{m-k} \bigg)(X) \\
&= \sum_{k=0}^{m} (-1)^{k} \binom{m}{k} \, A^{*(m-k)} X B^{m-k}.
\end{align*}
Then $\Delta^1_{T^*,T} = (-1)^0 T^*T + (-1)^1 I = T^*T - I$ and
\begin{equation*}
\beta_{m}(T) = (-1)^{m} \Delta_{T^{*}, T}^{m}(I).    
\end{equation*}   

The next theorem explores when an operator is similar to an $m$-partial isometry. 

\begin{thm}\label{sim-m-partialisom} 
Let $T \in \B(\h)$ and $m \in \mathbb{N}$. Then $T$ is similar to an $m$-partial isometry if and only if there exists an $S \in \B(\h)$ such that $T\Delta_{S,T}^{m}(I)=0$ and $T^* \sim_{+} S$.
\end{thm}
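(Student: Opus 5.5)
We would prove both directions by rewriting the $m$-partial isometry condition for a similar operator $V = PTP^{-1}$ directly in terms of $T$ and $P$. Throughout we read $\Delta^m_{S,T}(I)$ as $\sum_{k=0}^m (-1)^k\binom{m}{k} S^{m-k}T^{m-k}$, with $S$ in the role of $A^*$. We read $T^*\sim_+ S$ as $S = A^{-1}T^*A$ for some positive invertible $A\in\B(\h)$.

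\emph{Reduction to a positive intertwiner.} Suppose first that $T = Q^{-1}WQ$ with $W$ an $m$-partial isometry and $Q$ invertible. Write the polar decomposition $Q = UP$, with $U$ unitary and $P$ positive and invertible. Then $T = P^{-1}(U^*WU)P$, as in the proof of Theorem \ref{thm:m_isometries_finite_dim}. Since $\beta_m(U^*WU) = U^*\beta_m(W)U$, the operator $V := U^*WU$ satisfies $V\beta_m(V) = U^*W\beta_m(W)U = 0$. Hence $V$ is again an $m$-partial isometry. So we may assume $T = P^{-1}VP$ with $P$ positive invertible; equivalently $V = PTP^{-1}$.

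\emph{Key computation.} With $V = PTP^{-1}$ we have $V^{*j} = P^{-1}T^{*j}P$ because $P = P^*$. Therefore
\begin{equation*}
V^{*j}V^j = P^{-1}T^{*j}P^2T^jP^{-1} = P\,(P^{-2}T^{*}P^{2})^j\,T^j\,P^{-1}.
\end{equation*}
Put $A = P^2$ and $S = A^{-1}T^*A$, so that $S^j = A^{-1}T^{*j}A$. Summing with the binomial signs gives
\begin{equation*}
(-1)^m\beta_m(V) = P\,\Delta^m_{S,T}(I)\,P^{-1}.
\end{equation*}
Consequently
\begin{equation*}
V\beta_m(V) = \pm\, PTP^{-1}P\,\Delta^m_{S,T}(I)\,P^{-1} = \pm\, P\,T\Delta^m_{S,T}(I)\,P^{-1}.
\end{equation*}
Since $P$ is invertible, $V\beta_m(V)=0$ if and only if $T\Delta^m_{S,T}(I)=0$. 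This gives the forward direction, with $S = A^{-1}T^*A$ witnessing $T^*\sim_+ S$.

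\emph{Converse.} Suppose $T\Delta^m_{S,T}(I)=0$ and $S = A^{-1}T^*A$ with $A$ positive invertible. Set $P = A^{1/2}$ and $V = PTP^{-1}$. The same identity, read backwards, gives $V\beta_m(V)=0$, so $T$ is similar to the $m$-partial isometry $V$. If the convention for $\sim_+$ is instead $S = AT^*A^{-1}$, we simply replace $A$ by $A^{-1}$, which is still positive.

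The main obstacle is bookkeeping rather than analysis. One must check that the positivity of the intertwiner is exactly what lets $V^*$ be expressed through $T^*$ by conjugation. One must also fix the ordering and convention in $\Delta_{S,T}$ and $\sim_+$ consistently. The polar-decomposition step is what makes a positive intertwiner suffice in the forward direction.
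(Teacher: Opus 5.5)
Your proof is correct and is essentially the paper's argument. Both directions conjugate $V\beta_m(V)$ by the intertwiner, and your witness $S=(Q^*Q)^{-1}T^*(Q^*Q)$ coincides with the paper's $S=(AA^*)T^*(AA^*)^{-1}$ (take $A=Q^{-1}$). The only difference is that you first pass to a positive intertwiner via polar decomposition. The paper instead computes directly with an arbitrary invertible $A$ and notes that $AA^*$ is positive, so your polar-decomposition step is dispensable. Also, the correct identity is $\beta_m(V)=P\,\Delta^m_{S,T}(I)\,P^{-1}$, with no factor $(-1)^m$; this is immaterial, since only vanishing matters.
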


\begin{proof} 
$(\Rightarrow)$ Suppose that $V = A^{-1}TA$ for some $m$-partial isometry $V$. Then 
\begin{align*}
0&= \sum_{k=0}^{ m }(-1)^{k}\binom{m}{k}V V^{* m-k} V^{m-k}\\
&= A^{-1}\Bigg (\sum_{k=0}^{ m }(-1)^{k}\binom{m}{k} T A A^{*} T^{* m-k}(A^{-1})^{*} A^{-1}T^{m-k}\Bigg )A,
\end{align*}
so, 
\begin{equation*}
T\sum_{k=0}^{ m }(-1)^{k}\binom{m}{k}  (A A^{*}) T^{* m-k}(A A^*)^{-1}T^{m-k}=0.     
\end{equation*}
Then $S=(AA^*) T^*(AA^*)^{-1}$ satisfies $T^* \sim_+ S$ and
\begin{equation*}
    \sum_{k=0}^{ m }(-1)^{k}\binom{m}{k} TS^{ m-k} T^{m-k}=0.
\end{equation*}

\noindent$(\Leftarrow)$ Suppose that $T^{*}=P S P^{-1}$ for some $P>0$ and 
\begin{equation*}
T\triangle_{S,T}^{m}(I) = T \sum_{k=0}^{ m }(-1)^{k}\binom{m}{k} S^{ m-k} T^{m-k}=0.
\end{equation*}
Then
\begin{equation*}
T\sum_{k=0}^{ m }(-1)^{k}\binom{m}{k} P^{-1}T^{*{m-k}}P T^{m-k}=0.
\end{equation*}
Let $P^{1 / 2}$ be the positive square root of $P$ and let $V=P^{1 / 2} T P^{-1 / 2}$, so that
\begin{align*}
& V\sum_{k=0}^{ m }(-1)^{k}\binom{m}{k} V^{* m-k} V^{m-k}\\
&= P^{1 / 2}\Bigg( \sum_{k=0}^{ m }(-1)^{k}\binom{m}{k} T P^{-1 / 2} P^{-1 / 2} T^{* m-k}P^{1 / 2} P^{1 / 2}T^{m-k}\Bigg)P^{-1 / 2}\\
&= 0. 
\end{align*}
Thus, $V$ is an $m$-partial isometry that is similar to $T$. 
\end{proof}

The following theorem generalizes \cite[Thm.~3.1]{Badea} and \cite[Thm.~3.1]{Mbekhta}.

\begin{thm}\label{sim-m-partialisom2}
Let $T\in\mathcal B(\h)$ and let $m\in\mathbb N$.
Then $T$ is similar to an $m$-partial isometry whose adjoint is also an
$m$-partial isometry if and only if there exists an equivalent Hilbertian
norm $\newnorm{\cdot}$ on $\h$ such that if $T^{\sharp}$ denotes the
adjoint of $T$ with respect to the inner product associated with this norm,
then
\begin{equation*}
T\Delta^m_{T^{\sharp},T}(I)=0
\qquad\text{and}\qquad
T^{\sharp}\Delta^m_{T,T^{\sharp}}(I)=0.
\end{equation*}
If $m$ is odd, then $T$ and $T^{\sharp}$ are $m$-concave
with respect to $\newnorm{\cdot}$; that is, for all $\vec{x} \in \h$,
\begin{equation*}
\sum_{k=0}^{m}(-1)^{m-k}\binom{m}{k}\newnorm{T^{k}\vec{x}}^{2}\leq 0
\quad \text{and}\quad
\sum_{k=0}^{m}(-1)^{m-k}\binom{m}{k}\newnorm{(T^{\sharp})^{k} \vec{x}}^{2}\leq 0.
\end{equation*}
\end{thm}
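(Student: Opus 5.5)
The plan is to follow the renorming template of Theorem~\ref{ThmSimilarToMIsometryRenorming}. Throughout, the standing identity to keep in mind is
\begin{equation*}
\Delta^m_{T^{\sharp},T}(I)=\sum_{k=0}^m(-1)^k\binom{m}{k}T^{\sharp(m-k)}T^{m-k}=(-1)^m\beta^{\sharp}_m(T),
\end{equation*}
where $\beta^{\sharp}_m$ denotes $\beta_m$ computed with respect to the new inner product. Consequently, $T\Delta^m_{T^{\sharp},T}(I)=0$ says exactly that $T$ is an $m$-partial isometry on $(\h,\newinner{\cdot,\cdot})$. Likewise, since $(T^{\sharp})^{\sharp}=T$, the condition $T^{\sharp}\Delta^m_{T,T^{\sharp}}(I)=0$ says that $T^{\sharp}$ is an $m$-partial isometry there.

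\emph{Forward direction.} Suppose $V=A^{-1}TA$, with $V$ and $V^*$ both $m$-partial isometries. We may replace $A$ by its positive polar factor: writing $A=UP$ with $P>0$ and $U$ unitary, the operator $U^*VU$ is again such an operator, and $T=P(U^*VU)P^{-1}$. So assume $A=P>0$ and define $\newnorm{\vec{x}}=\norm{P^{-1}\vec{x}}$, so that $\newinner{\vec{x},\vec{y}}=\inner{P^{-2}\vec{x},\vec{y}}$. A direct check gives $T^{\sharp}=P^2T^*P^{-2}=PV^*P^{-1}$. Hence $T^{\sharp k}T^k=PV^{*k}V^kP^{-1}$, which yields
\begin{equation*}
T\Delta^m_{T^{\sharp},T}(I)=PV\Delta^m_{V^*,V}(I)P^{-1}=0,
\end{equation*}
and symmetrically $T^{\sharp}\Delta^m_{T,T^{\sharp}}(I)=PV^*\Delta^m_{V,V^*}(I)P^{-1}=0$.

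\emph{Converse.} Write $\newinner{\vec{x},\vec{y}}=\inner{G\vec{x},\vec{y}}$ with $G>0$ invertible, and set $P=G^{-1/2}$ and $V=P^{-1}TP=G^{1/2}TG^{-1/2}$. Then $T^{\sharp}=G^{-1}T^*G=PV^*P^{-1}$, and reversing the computation above shows that $V\Delta^m_{V^*,V}(I)=0$ and $V^*\Delta^m_{V,V^*}(I)=0$. Thus $V$ and $V^*$ are both $m$-partial isometries and $V\sim T$. This is essentially the argument of Theorem~\ref{sim-m-partialisom} with $S=T^{\sharp}$.

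\emph{Concavity for odd $m$.} Apply Lemma~\ref{Lemma:Projection} in the Hilbert space $(\h,\newinner{\cdot,\cdot})$: there $(-1)^m\beta^{\sharp}_m(T)$ is the $\newinner{\cdot,\cdot}$-orthogonal projection onto $\N(T)$. For odd $m$ this gives $\beta^{\sharp}_m(T)=-P^{\sharp}_{\N(T)}\le 0$ in the new order, so
\begin{equation*}
\sum_k(-1)^{m-k}\binom{m}{k}\newnorm{T^k\vec{x}}^2=\newinner{\beta^{\sharp}_m(T)\vec{x},\vec{x}}=-\newnorm{P^{\sharp}_{\N(T)}\vec{x}}^2\le0.
\end{equation*}
The same argument applied to $T^{\sharp}$, with projection onto $\N(T^{\sharp})$, gives its inequality.

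The main obstacle is bookkeeping rather than substance. One must verify that $T^{\sharp}=G^{-1}T^*G$ and that the new-norm $\beta_m$ equals $P\beta_m(V)P^{-1}$, and one must make sure the polar-decomposition reduction preserves the property that the adjoint is also an $m$-partial isometry. The latter holds because $(U^*VU)^*=U^*V^*U$, and unitary equivalence preserves $m$-partial isometries.
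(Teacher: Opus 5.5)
Your proof is correct and is essentially the paper's argument: renorm via the similarity, identify $T^{\sharp}$ as a conjugate of $V^*$ so that the two displayed conditions become $V\beta_m(V)=0$ and $V^*\beta_m(V^*)=0$, and get concavity from Lemma~\ref{Lemma:Projection} in the new Hilbert space. There are two harmless slips. First, with $A=UP$ one gets $T=UPVP^{-1}U^*$, so you want the left polar form $A=PW$, which gives $T=P(WVW^*)P^{-1}$; the reduction is unnecessary anyway, since $\newinner{\vec{u},\vec{v}}=\inner{A^{-1}\vec{u},A^{-1}\vec{v}}$ already yields $T^{\sharp}=AV^*A^{-1}$ for any invertible $A$. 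Second, $\Delta^m_{T^{\sharp},T}(I)=\beta^{\sharp}_m(T)$ with no factor $(-1)^m$, which does not affect any of the vanishing conditions.
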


\begin{proof}
$(\Rightarrow)$
Suppose that $T$ is similar to an $m$-partial isometry $V$ whose adjoint $V^*$ is also an $m$-partial isometry. Then there exists an invertible $X\in\B(\h)$ such that $V=XTX^{-1}$.
Define an inner product on $\h$ by
\begin{equation*}
\newinner{ \vec{u}, \vec{v} }
=
\langle X\vec{u},X\vec{v}\rangle,
\end{equation*}
for $\vec{u},\vec{v}\in\h$; the associated norm is
$\newnorm{\vec{u}} = \newinner{ \vec{u},\vec{u}}^{1/2} = \norm{X \vec{u}}$.
Since $X^*X$ is positive and invertible, $\newnorm{\cdot}$
is a Hilbertian norm equivalent to the original one.
Let $T^{\sharp}=X^{-1}V^*X$ denote the adjoint of $T$ with respect to this new inner product.

Since $V$ is an $m$-partial isometry, $V\beta_m(V)=0$.
Moreover, $X$ is a unitary operator from
$(\h,\newinner{\cdot,\cdot})$ onto
$(\h,\langle\cdot,\cdot\rangle)$, so
\begin{equation*}
\beta_m^{\sharp}(T)=X^{-1}\beta_m(V)X,
\end{equation*}
in which
\begin{equation*}
\beta_m^{\sharp}(T)
=
\sum_{k=0}^m(-1)^{m-k}\binom{m}{k}(T^{\sharp})^kT^k.
\end{equation*}
Therefore,
\begin{equation*}
T\beta_m^{\sharp}(T)
=
X^{-1}V\beta_m(V)X
=
0.
\end{equation*}
Since
\begin{equation*}
\beta_m^{\sharp}(T)
=
\sum_{k=0}^m(-1)^{m-k}\binom{m}{k}(T^{\sharp})^kT^k
=
\Delta^m_{T^{\sharp},T}(I),
\end{equation*}
we obtain
\begin{equation*}
T\Delta^m_{T^{\sharp},T}(I)=0.
\end{equation*}
Similarly, since $V^*$ is an $m$-partial isometry, 
$V^*\beta_m(V^*)=0$.
Since $T^{\sharp}=X^{-1}V^*X$, it follows that
$T^{\sharp}\beta_m^{\sharp}(T^{\sharp})
=X^{-1}V^*\beta_m(V^*)X
=0$.
Moreover, since $(T^{\sharp})^\sharp=T$, 
\begin{equation*}
\beta_m^{\sharp}(T^{\sharp})
=
\sum_{k=0}^m(-1)^{m-k}\binom{m}{k}T^k(T^{\sharp})^k
=
\Delta^m_{T,T^{\sharp}}(I).
\end{equation*}
Consequently, $T^{\sharp}\Delta^m_{T,T^{\sharp}}(I)=0$.

If $m$ is odd, then the $m$-concavity of $T$ follows from
Lemma \ref{Lemma:Projection} applied to $T$ on the Hilbert space $(\h,\newinner{\cdot,\cdot})$.
Similarly, since $T^{\sharp}$ is also an $m$-partial isometry with respect
to $\newnorm{\cdot}$, Lemma \ref{Lemma:Projection} applied to $T^{\sharp}$ gives the $m$-concavity
of $T^{\sharp}$.

\noindent($\Leftarrow)$
Suppose that there exists an equivalent Hilbertian norm
$\newnorm{\cdot}$ on $\h$ such that
\begin{equation*}
T\Delta^m_{T^{\sharp},T}(I)=0
\qquad\text{and}\qquad
T^{\sharp}\Delta^m_{T,T^{\sharp}}(I)=0,
\end{equation*}
in which $T^{\sharp}$ denotes the adjoint of $T$ with respect to this inner product associated with this norm. Since
\begin{equation*}
\Delta^m_{T^{\sharp},T}(I)
=
\sum_{k=0}^m(-1)^{m-k}\binom{m}{k}(T^{\sharp})^kT^k
=
\beta_m^{\sharp}(T),
\end{equation*}
we get $T\beta_m^{\sharp}(T)=0$.
Thus, $T$ is an $m$-partial isometry on $(\h,\newnorm{\cdot})$.
Since $(T^{\sharp})^\sharp=T$, 
\begin{equation*}
\Delta^m_{T,T^{\sharp}}(I)
=\sum_{k=0}^m(-1)^{m-k}\binom{m}{k}T^k(T^{\sharp})^k
= \beta_m^{\sharp}(T^{\sharp}).
\end{equation*}
The identities
\begin{equation*}
T^{\sharp}\Delta^m_{T,T^{\sharp}}(I)=0
\quad\text{and}\quad
T^{\sharp}\beta_m^{\sharp}(T^{\sharp})=0
\end{equation*}
are therefore equivalent.
Thus, $T^{\sharp}$ is also an $m$-partial isometry on
$(\h,\newnorm{\cdot})$.

Since $\newnorm{\cdot}$ is an equivalent Hilbertian norm, there exists a positive invertible $G\in\B(\h)$ such that
\begin{equation*}
\newinner{ \vec{x},\vec{y} }
=
\langle G\vec{x},\vec{y}\rangle
\end{equation*}
for all $\vec{x},\vec{y}\in \h$. Let
$X=G^{1/2}$.
Then $X$ is a unitary operator from
$(\h,\newinner{\cdot,\cdot})$ onto the original
Hilbert space $(\h,\langle\cdot,\cdot\rangle)$.
Then $V=XTX^{-1}$ is similar to $T$. Since $T$ is an $m$-partial isometry on
$(\h,\newnorm{\cdot})$, unitary equivalence ensures that $V$ is an
$m$-partial isometry on the original Hilbert space.
Furthermore,
$V^*=XT^{\sharp} X^{-1}$.
Since $T^{\sharp}$ is an $m$-partial isometry on
$(\h,\newnorm{\cdot})$, unitary equivalence ensures that $V^*$
is an $m$-partial isometry on the original Hilbert space.
\end{proof}

\section{Further research}\label{Section:Further}
There are many potential avenues for additional investigation on the subject of $m$-partial isometries. We conclude this paper with several open questions that we hope will motivate further research.

\begin{Ques} 
Let $T$ be an $m$-partial isometry and let 
\begin{equation*}
S=\sum_{k=1}^m (-1)^{k-1}\binom{m}{k}T^{*k}T^{k-1}    
\end{equation*} 
be the generalized inverse of $T$ appearing in Lemma~\ref{lem:pseudompartial}. For $m=1$, this reduces to $S=T^*$, which is again a partial isometry whenever $T$ is. For $m\geq 2$, when is $S$ necessarily an $m$-partial isometry? 
Similar to an $m$-partial isometry?
\end{Ques}

\begin{Ques}
    In \cite{Mbekhta} Mbekhta proved that that $ T $ is a partial isometry if and only if it has a contractive generalized inverse. Does there exist a similar characterization of $m$-partial isometries?
\end{Ques}

In \cite{Badea} the authors considered so-called \emph{regular} operators; that is, operators $T\in \B(\h)$ with closed range such that
\begin{equation*}
    \N(T) \subset \bigcap_{n\geq 1}\R(T^n).
\end{equation*}
They showed that regular partial isometries are precisely direct sums of isometries, unitaries, and co-isometries.

\begin{Ques}
    Do regular $m$-partial isometries have a decomposition as the direct sum of an $m$-isometry, invertible $m$-isometry, and the adjoint of $m$-isometry?
\end{Ques}

\begin{Ques}\label{Question:Dilation}
    Does every $m$-partial isometry $T$ possess an $m$-isometric dilation if we assume $ r(T)\leq 1 $?
\end{Ques}

Garcia and Sherman proved that 
$A \in \MM_{n}$ is similar to a partial isometry if and only if 
(a) $\sigma(A) \subseteq \D^{-}$;
(b) if $\zeta \in \sigma(A) \cap \T$, then its algebraic and geometric multiplicities are equal;
and (c) $\dim \N(A) \geq \dim \N(A-\lambda I)$ for each $\lambda \in \sigma(A) \cap \D$ \cite{Garcia}.  Does an analogous characterization exist for $m$-partial isometries?

\begin{Ques}
Can $m$-partial isometries on finite-dimensional Hilbert spaces be concretely classified up to similarity?
\end{Ques}

The condition $ T\beta_{2}(T) = 0 $ can be written  as
\begin{equation*}
  T(2T^*-T^{*2}T)T = T,
\end{equation*}
so $S = 2T^*-T^{*2}T $ is a generalized inverse for $T$ and $ST$ is the orthogonal projection onto $\R(T^*)$. 
Moreover, $STS = S$, so $S$ is almost the Moore-Penrose inverse of $T$.  However, $TS$ need not be selfadjoint: if
\begin{equation*}
T = \begin{bmatrix} a & 0 \\ 1 & 0 \end{bmatrix}    
\end{equation*}
with $|a|^2 = \frac{1+\sqrt{5}}{2}$, then $T$ is a $2$-partial isometry and
\begin{equation*}
T S = \begin{bmatrix} -1 & 2a \\[2mm] \overline{a}\,\frac{1-\sqrt{5}}{2} & 2 \end{bmatrix}    
\end{equation*}
is not selfadjoint. All is not lost, however.  
The term ``selfadjoint'' above refers to the standard
inner product on $\C^2$.  Since different inner products give rise to different adjoints, the next question is natural.

\begin{Ques}
If $T$ is a $2$-partial isometry, is $2T^*-T^{*2}T$ the Moore--Penrose inverse of $T$ 
with respect to some inner product?
\end{Ques}

\bibliography{main}
\bibliographystyle{amsplain}

\end{document}